\documentclass[10pt]{amsart}
\usepackage{texmaX,semmaX,semtkzX}
\usepackage{makecell}
\usepackage{verbatim}
\usepackage{comment}

\begin{document}

\title{A note on local formulae for the parity of Selmer ranks}
\author{Adam Morgan}

\address{University of Glasgow, Glasgow, G12 8QQ, UK.}
\email{adam.morgan@glasgow.ac.uk}

\subjclass[2010]{11G40 (11G10, 11G20, 11G30, 14K15)}

\begin{abstract}
In this note, we provide evidence for a certain `twisted' version of the parity conjecture for Jacobians, introduced in prior work of  V. Dokchitser, Green, Konstantinou and the author. To do this, we use  arithmetic duality theorems for abelian varieties to study  the determinant of certain endomorphisms acting on $p^\infty$-Selmer groups. 
\end{abstract}

\maketitle

\section{Introduction}

The principal aim of this paper is to provide evidence for a conjecture, made in the work \cite{DGKM2024} of  V. Dokchitser, Green, Konstantinou and the author, concerning a certain `twisted' version of the parity conjecture for Jacobians of curves. Our main result in this direction, which was advertised as Theorem 1.8 in \cite{DGKM2024}, is Theorem \ref{thm:termcompat_into} below. We obtain it as a consequence of Theorem \ref{thm:main_local_formula}, which relates the determinant of the action of a suitable endomorphism of an abelian variety on its $p^\infty$-Selmer group  to auxiliary local determinants. 
As we explain below, this can be viewed as a (partial) generalisation of a formula established in \cite{MR2551757} by Coates, Fukaya, Kato and Sujatha.
 
\subsection{Main results}
Let $K$ be a number field, let $X$ be a curve over $K$, and let $G$ be a finite subgroup of $\textup{Aut}_K(X)$. The group $G$ acts on the Mordell--Weil group $\textup{Jac}_X(K)$ of the Jacobian of $X$, making $\textup{Jac}_X(K)\otimes_{\mathbb{Z}} \mathbb{C}$ into a finite dimensional representation of $G$. If $\tau$ is a finite dimensional orthogonal representation of $G$, one has a corresponding global root number $w(X^\tau)\in \{\pm 1\}$ (see \cite[Section 2]{DGKM2024}). Conjecture~1.4(2) of  \cite{DGKM2024}, which is implied by standard conjectures on $L$-functions, is the equality
\begin{equation} \label{parity_version_intro}
w(X^\tau)=(-1)^{\left \langle \tau, \textup{Jac}_X(K)\otimes \mathbb{C}\right \rangle}.\end{equation}
In this paper, we verify a  variant of this conjecture for a large class of representations $\tau$. Specifically, let $\Theta$ be a Brauer relation for $G$. One can associate to $\Theta$,  and the choice of an auxiliary prime $p$,  a certain (non-empty) set of  representations $\mathcal{T}_{\Theta,p}$ -- see Section \ref{recollections} for details. These representations are central to the approach to the parity conjecture developed in \cite{DGKM2024}. Specifically, it follows from the main results of \cite{DGKM2024} (see Theorems 1.5 and 1.7 in particular) that establishing \eqref{parity_version_intro} for all curves $X$ and all orthogonal $\tau\in \mathcal{T}_{\Theta,p}$, for varying $\Theta$ and $p$, is enough to deduce the `classical' parity conjecture for all Jacobians.

To state our result, take $X/K$ as above and write $\Omega^1(\textup{Jac}_X/K)$ for the $K$-vector space of global sections of  $\Omega^1_{\textup{Jac}_X/K}$. Denote by $X_p(\textup{Jac}_X)$ the Pontryagin dual of the $p^\infty$-Selmer group of $\textup{Jac}_X$, and write $\mathcal{X}_p(\textup{Jac}_X)=X_p(\textup{Jac}_X)\otimes_{\mathbb{Z}_p} \mathbb{Q}_p$.

\begin{theorem} \label{thm:termcompat_into}
Suppose that $\textup{Jac}_X$ has semistable reduction at all nonarchimedean places of $K$, and that $\Omega^1(\textup{Jac}_X/K)$ is self-dual as a $G$-representation. Let $p$ be an odd prime and suppose that $\textup{Jac}_X$ has good ordinary reduction at all primes above~$p$. Then, for every  orthogonal representation $\tau \in \mathcal{T}_{\Theta,p}$, we have
$$
  w(X^{\tau})=(-1)^{\left\langle \tau,\mathcal{X}_p(\textup{Jac}_X)\right\rangle}.
$$
In particular, if the Shafarevich--Tate group of $\textup{Jac}_X$ is finite, then \eqref{parity_version_intro} holds for  $\tau$. 
\end{theorem}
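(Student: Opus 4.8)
The plan is to deduce the asserted equality from the global-to-local formula of Theorem~\ref{thm:main_local_formula} by writing both sides as products, over the places $v$ of $K$, of local Hilbert symbols and matching these place by place; the statement about the Shafarevich--Tate group then follows formally. The first ingredient is the defining property of $\mathcal{T}_{\Theta,p}$ recalled in Section~\ref{recollections}: attached to the Brauer relation $\Theta$ and the prime $p$ there is an auxiliary endomorphism $\phi$, built from the $G$-action on $\textup{Jac}_X$ and the terms of $\Theta$ through the regulator-constant formalism of \cite{DGKM2024}, with the property that for every self-dual $\mathbb{Q}_p[G]$-module $M$ the parity $\langle\tau,M\rangle$ is recorded by the class of $\det(\phi\mid M)$ in $\mathbb{Q}_p^\times/(\mathbb{Q}_p^\times)^2$. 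To apply this with $M=\mathcal{X}_p(\textup{Jac}_X)$ one first needs $\mathcal{X}_p(\textup{Jac}_X)$ to be self-dual as a $\mathbb{Q}_p[G]$-representation: the canonical principal polarization of $\textup{Jac}_X$ is $G$-equivariant (the Rosati involution acts by inversion on the image of $G$), so it identifies the $p^\infty$-Selmer group of $\textup{Jac}_X$ with that of the dual abelian variety as $G$-modules, and the Cassels--Tate pairing --- which is $G$-invariant and, since $p$ is odd, perfect after $\otimes\,\mathbb{Q}_p$ --- then provides the self-duality.

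Next I would apply Theorem~\ref{thm:main_local_formula} to $\phi$ acting on $X_p(\textup{Jac}_X)$, obtaining
\[
\det(\phi\mid\mathcal{X}_p(\textup{Jac}_X))\ \equiv\ \prod_v c_v \pmod{(\mathbb{Q}_p^\times)^2},
\]
where $c_v$ is the local determinant of $\phi$ at $v$: essentially the determinant of $\phi$ on $\textup{Jac}_X(K_v)\otimes\mathbb{Q}_p$ together with the local correction term supplied by Theorem~\ref{thm:main_local_formula} (the analogue of the Tamagawa-type factors in the formula of \cite{MR2551757}). Combining with the previous paragraph we get $(-1)^{\langle\tau,\mathcal{X}_p(\textup{Jac}_X)\rangle}=\prod_v(c_v,\cdot)$, and the theorem is thereby reduced to the local identity $(c_v,\cdot)=w(X^\tau/K_v)$ for each place $v$.

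To prove this local identity I would distinguish three cases. If $v$ is archimedean or a place of good reduction with $v\nmid p$, the equality $(c_v,\cdot)=w(X^\tau/K_v)$ is a direct computation of both sides --- the local root number from the Hodge structure of $h^1(X)$ and the Weil pairing, the local term $c_v$ from its definition in Theorem~\ref{thm:main_local_formula} (at good places with $v\nmid p$ both are trivial, the component group being trivial). If $v$ is a place of bad, hence semistable, reduction, I would compute $c_v$ from the N\'eron model via Grothendieck's monodromy pairing, so that it is expressed in terms of the toric character group regarded as a $G\times\textup{Gal}(\overline{K_v}/K_v)$-module, and then match it with the explicit semistable formula for $w(X^\tau/K_v)$; the resulting equality is one of the local regulator-constant compatibilities established in \cite{DGKM2024}. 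If $v\mid p$, where $\textup{Jac}_X$ has good ordinary reduction, I would use the ordinary filtration on $V_p(\textup{Jac}_X)$ to reduce $c_v$ to a determinant on the unramified quotient --- the formal-group part contributing a square under the standing hypotheses --- and compare this with the local root number of a good ordinary abelian variety, the self-duality of $\Omega^1(\textup{Jac}_X/K)$ being used to make the two sides agree at the places above $p$ (and at the archimedean places, where the local root number sees the Hodge structure of $h^1(X)$). Multiplying over all $v$ and using $w(X^\tau)=\prod_v w(X^\tau/K_v)$ then yields $w(X^\tau)=(-1)^{\langle\tau,\mathcal{X}_p(\textup{Jac}_X)\rangle}$.

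The main obstacle is exactly this place-by-place comparison at the bad semistable primes and at the primes above $p$: reconciling the arithmetic local factor $c_v$ coming out of the duality-theoretic formula of Theorem~\ref{thm:main_local_formula} with the purely representation-theoretic local root number, which is where semistability, good ordinary reduction at $p$, the oddness of $p$ and the self-duality of $\Omega^1(\textup{Jac}_X/K)$ all enter in an essential way. For the last assertion, suppose the Shafarevich--Tate group of $\textup{Jac}_X$ is finite; then its $p$-part is finite, so $\mathcal{X}_p(\textup{Jac}_X)\cong(\textup{Jac}_X(K)\otimes\mathbb{Q}_p)^{*}$ as $\mathbb{Q}_p[G]$-modules. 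As $\tau$ is self-dual the multiplicity $\langle\tau,-\rangle$ is insensitive to dualising, and being computed from characters it takes the same value over $\mathbb{Q}_p$ and over $\mathbb{C}$; hence $\langle\tau,\mathcal{X}_p(\textup{Jac}_X)\rangle=\langle\tau,\textup{Jac}_X(K)\otimes\mathbb{C}\rangle$, and \eqref{parity_version_intro} for $\tau$ follows from the identity just proved.
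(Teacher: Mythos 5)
Your overall strategy --- feed Theorem \ref{thm:main_local_formula} with data built from the Brauer relation and then match the local terms against $w(X^{\tau}/K_v)$ via the regulator-constant/root-number compatibilities of \cite{DGKM2024} --- is the same as the paper's, but the proposal has a genuine gap at its core: the object to which Theorem \ref{thm:main_local_formula} is applied is never actually constructed. You posit an ``auxiliary endomorphism $\phi$'' acting on $\textup{Jac}_X$ (and on its Selmer group) with the property that $\det(\phi\mid M)$ records $\left\langle \tau, M\right\rangle$ for \emph{every} self-dual $\mathbb{Q}_p[G]$-module $M$; no such endomorphism exists in general, and in any case Theorem \ref{thm:main_local_formula} does not apply to an abstract operator on a $G$-module --- it applies to $f=\phi^t\phi$ for an isogeny $\phi:A\rightarrow B$ of principally polarised abelian varieties over $K$. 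The actual mechanism is: choose a $G$-module homomorphism $\Phi$ realising $\Theta$, use \cite[Theorem 4.3]{DGKM2024} to obtain an isogeny $f_\Phi:\prod_j \textup{Jac}_{X/H_j'}\rightarrow \prod_i\textup{Jac}_{X/H_i}$ with $f_\Phi^t f_\Phi=f_{\Phi\Phi^\vee}$, and note that the relevant determinants are taken not on $M$ but on $\bigoplus_j M^{H_j'}$, where $\det\big((\Phi\Phi^\vee)^*\big)$ equals the regulator constant $\mathcal{C}_\Theta(M)$ up to squares (\cite[Corollary 3.21]{DGKM2024}); only then does the defining congruence of $\mathcal{T}_{\Theta,p}$ convert these determinants into $\left\langle \tau_{\Theta,p}, M\right\rangle$, with $M$ running over $\mathcal{X}_p(\textup{Jac}_X)$, $\mathfrak{X}_{\textup{Jac}_X,v}^{\textup{Fr}_v}\otimes\mathbb{Q}_p$ and $\Omega^1$. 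Passing to the products of quotient Jacobians is also what supplies the principal polarisations and lets the global hypotheses (semistability, good ordinary reduction at $p$) be verified for $A$ and $B$. Without this construction your reduction to a local identity $(c_v,\cdot)=w(X^\tau/K_v)$ has no well-defined left-hand side; relatedly, the local terms of Theorem \ref{thm:main_local_formula} are determinants on $\Omega^1(A/K_v)$ (archimedean) and on $\mathfrak{X}_{A,v}^{\textup{Fr}_v}\otimes\mathbb{Q}_p$ (nonarchimedean), not determinants on $\textup{Jac}_X(K_v)\otimes\mathbb{Q}_p$ as you describe.

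Two further points of detail. First, your case analysis at $v\mid p$ (ordinary filtration, formal-group contribution) re-proves material that is already inside Theorem \ref{thm:main_local_formula}; once the theorem is invoked correctly, nothing arithmetic remains to be done at $p$ beyond observing that the quotient Jacobians inherit good ordinary reduction, and the residual work at every finite place is purely representation-theoretic. Second, the hypothesis that $\Omega^1(\textup{Jac}_X)$ is self-dual is used at the \emph{archimedean} places: it makes the character of $\Omega^1(A/K_v)$ rational, hence the representation realisable over the maximal unramified extension $E_p$ of $\mathbb{Q}_p$, which is exactly why $\mathcal{T}_{\Theta,p}$ is defined by requiring the congruence for self-dual $E_p[G]$-representations rather than $\mathbb{Q}_p[G]$-representations; your proposal quantifies over $\mathbb{Q}_p[G]$-modules and attributes the role of this hypothesis to the places above $p$, which misses the step where the congruence is applied to $\Omega^1(A/K_v)$. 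The final deduction from finiteness of the Shafarevich--Tate group is fine.
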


\begin{remark}
For the application to the `classical' parity conjecture alluded to above, it suffices to   consider only those curves $X$ and subgroups $G$ of $\textup{Aut}_K(X)$ such that $\Omega^1(\textup{Jac}_X)$ is self-dual as a $G$-module; see  \cite[Theorem 1.7]{MR2534092}. Thus the assumption on  $\Omega^1(\textup{Jac}_X)$ in Theorem \ref{thm:termcompat_into} is not overly restrictive. 
\end{remark}

\begin{remark}
We also prove the analogue of Theorem \ref{thm:termcompat_into} for the slightly expanded notion of a \textit{pseudo} Brauer relation introduced in \cite[Definition 4.1]{DGKM2024}; see Remark \ref{pseudo_rem}.
\end{remark}

 To describe our second main result, let $A/K$ and $B/K$ be principally polarised abelian varieties, and let $\phi:A\rightarrow B$ be an isogeny. Denote by $\phi^t:B\rightarrow A$ the corresponding dual isogeny and write $f=\phi^t\phi\in \textup{End}(A)$.  
 Given a nonarchimedean place $v$ of $K$ at which $A$ has semistable reduction,  
 denote by $\mathfrak{X}_{A,v}$ the character group of the toric part of the reduction of $A/K_v$. This is a finite rank free $\mathbb{Z}$-module with an action of the Frobenius element $\textup{Fr}_v$ at $v$. 

\begin{theorem} \label{thm:main_local_formula}
 Let $p$ be an odd prime. Suppose that $A$ has semistable reduction at all nonarchimedean places $v$ of $K$, and good ordinary reduction at all places $v\mid p$. Then  
\begin{eqnarray*}
\lefteqn{  \textup{ord}_p\det(f\mid \mathcal{X}_p(A))  \equiv } \\&&  \sum_{v\mid \infty}  \textup{ord}_p \det(f \mid \Omega^1(A/K_v)) +   \sum_{v\nmid \infty} \textup{ord}_p \det(f \mid  \mathfrak{X}_{A,v}^{\textup{Fr}_v}\otimes \mathbb{Q}_p ) \pmod 2. 
\end{eqnarray*}
(For each $v\mid \infty$, we have $\det(f \mid \Omega^1(A/K_v))=\pm \deg(\phi)\in \mathbb{Z}$; see Section \ref{ssec:arch_places}.)
\end{theorem}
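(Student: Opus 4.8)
The plan is to prove Theorem~\ref{thm:main_local_formula} via a global Euler characteristic computation, using Cassels--Poitou--Tate duality to relate the determinant of $f$ acting on the $p^\infty$-Selmer group to a product of local terms, and then to identify each local term. First I would work $p$-adically: replace the Selmer group by its Pontryagin dual $X_p(A)$, tensor with $\mathbb{Q}_p$, and observe that $\textup{ord}_p\det(f\mid \mathcal{X}_p(A))$ depends only on the semisimplification of $X_p(A)\otimes\mathbb{Q}_p$ as a $\mathbb{Q}_p[f]$-module. Since $f=\phi^t\phi$ and $\phi$ induces maps on Selmer groups compatibly with its dual, one expects the Selmer group of $A$ and that of $B$ to be ``isogenous'' as $f$-modules up to controlled error; more precisely, I would set up the snake-lemma diagram relating $\textup{Sel}(\phi)$, $\textup{Sel}(\phi^t)$, and the $p^\infty$-Selmer groups, so that $\textup{ord}_p\det(f\mid\mathcal{X}_p(A))$ becomes, modulo~$2$, a sum of $\textup{ord}_p$ of orders of Selmer groups of the isogenies $\phi$ and $\phi^t$ (and of the relevant Tate--Shafarevich and Mordell--Weil contributions, which cancel in pairs by the principal polarisations).

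Next I would invoke the Cassels--Poitou--Tate exact sequence for the isogeny $\phi$ (and separately $\phi^t$) to express $\#\textup{Sel}(\phi)/\#\textup{Sel}(\phi^t)$, up to squares and up to the global $H^0$ and $H^1$ terms, as a product over all places $v$ of local factors $\#H^1(K_v,A[\phi])/(\text{image of local points})$, i.e.\ local Tamagawa-type ratios. This is exactly the shape of the Birch--Swinnerton-Dyer/Tamagawa number computations for isogenies; the upshot is a formula
$$
\textup{ord}_p\det(f\mid\mathcal{X}_p(A)) \equiv \sum_{v} \textup{ord}_p c_v(f) \pmod 2,
$$
where $c_v(f)$ is a purely local quantity attached to $f$ acting on $A/K_v$. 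The archimedean and $p$-adic places will have to be separated from the remaining nonarchimedean places because the ``local points'' have positive-dimensional (archimedean) or formal-group (at $p$) contributions.

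Then I would compute the local terms case by case. At $v\mid\infty$ the local factor is governed by $\det(f\mid\Omega^1(A/K_v))$ via the comparison of the real/complex points with the cotangent space and the known value $\det(f\mid\Omega^1)=\pm\deg\phi$; this is where Section~\ref{ssec:arch_places} is used. At $v\nmid\infty$, $v\nmid p$, with semistable reduction, I would use the theory of the N\'eron model and the monodromy filtration: the relevant local cohomology is controlled by the character group $\mathfrak{X}_{A,v}$ of the toric part together with the component group, and a standard computation (going back to the toric-part/abelian-part d\'evissage, cf.\ the Coates--Fukaya--Kato--Sujatha formula in \cite{MR2551757}) shows the local contribution is $\textup{ord}_p\det(f\mid\mathfrak{X}_{A,v}^{\textup{Fr}_v}\otimes\mathbb{Q}_p)$ modulo~$2$; here the self-duality coming from the principal polarisation is what makes the abelian-part and ``other'' contributions square classes. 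At $v\mid p$ with good ordinary reduction, I would show the local term vanishes modulo~$2$: the formal group and the unramified quotient pair up under the ordinary filtration and Tate local duality, so $\textup{ord}_p c_v(f)$ is even — this is the one place where ``good ordinary at $p$'' is essential, and it explains why no $v\mid p$ term appears on the right-hand side.

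The main obstacle I expect is \emph{bookkeeping the square classes correctly}: many of the intermediate quantities (orders of $\Sha$, of $H^1(G_{K},A[\phi])$ terms, of Mordell--Weil cokernels) are only controlled modulo squares, and one must check that everything that is not manifestly a local quantity either cancels between $\phi$ and $\phi^t$ or is a perfect square because of the principal polarisations and the self-duality hypothesis on $\Omega^1$ (used in Theorem~\ref{thm:termcompat_into}, though here only $f=\phi^t\phi$ enters). A secondary difficulty is the $v\mid p$ analysis: controlling $\det(f)$ on the relevant Bloch--Kato-style local condition requires the ordinary filtration and a compatibility between the Selmer condition and Tate duality at~$p$, and getting the parity exactly right (rather than just ``even up to a bounded error'') is the delicate point.
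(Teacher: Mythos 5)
Your global architecture --- use duality to convert $\textup{ord}_p\det(f\mid\mathcal{X}_p(A))$ into a product over all places of local isogeny quotients, handle the Mordell--Weil/Tate--Shafarevich bookkeeping using the principal polarisations, then evaluate the local terms place by place --- is in outline the same as the paper's (which runs through \cite[Theorem I.7.3]{MR2261462}, the $Q(\phi)$-formalism of \cite{MR2680426}, and the Poonen--Stoll squareness of $\Sha_0(A)[p^\infty]$ from \cite{MR1740984}). The genuine gap is in your local step: the local quotients do \emph{not} individually reduce to the stated contributions, and in particular your claim that at $v\mid p$ with good ordinary reduction ``the formal group and the unramified quotient pair up under the ordinary filtration and Tate local duality, so the local term is even'' is false. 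At such a place the quotient $\#B(K_v)/\phi A(K_v)\big/\#A(K_v)[\phi]$ equals $\vert\det D(\widehat{\phi})\vert_{K_v}^{-1}$ (Lemma \ref{size_of_coker_tamagawa}), whose $p$-valuation is $n[K_v:\mathbb{Q}_p]$ with $p^{n}=\#\ker(\widehat{\phi})$; already for $f=[p]$ on an elliptic curve over $\mathbb{Q}$ with good ordinary reduction at $p$ this is $1$, i.e.\ odd. Symmetrically, at a real place the quotient is governed by $\#A(\mathbb{R})[\phi]$ and is \emph{not} congruent to $\textup{ord}_p\det(f\mid\Omega^1(A/K_v))$ in general (Proposition \ref{main_arch_prop}): in the same example the real quotient has even valuation while $\textup{ord}_p\det([p]\mid\Omega^1)=\dim A$ can be odd. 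So the place-by-place identification you propose fails at exactly the archimedean and $p$-adic places, and the discrepancies cannot be removed locally.

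What makes the paper's proof work is the extra ingredient your proposal is missing: Theorem \ref{main_thm_local} shows that each local quotient equals the desired local determinant term \emph{multiplied by} the norm-residue symbol $(-1,\chi_{A[\phi],p})_{K_v}$, where $\chi_{A[\phi],p}$ is a globally defined character built from the determinants of the Galois action on the graded pieces of $A[\phi][p^\infty]$ (Definition \ref{def:the_galois_character}). These correction symbols are nontrivial precisely at real places and at places of residue characteristic $p$ (Lemma \ref{chi_unram_trivial_lemma}), and they cancel only after taking the product over \emph{all} places, by Artin reciprocity. Without introducing this character (or some equivalent global cancellation mechanism), the case-by-case evaluation in your third paragraph cannot be completed, and the parity contribution of the places above $p$ is genuinely nonzero rather than ``even by local duality''. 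The rest of your sketch (reduction via snake lemma/Cassels--Poitou--Tate, cancellation of $\Sha$ and torsion terms up to squares using the principal polarisations) is the right idea and matches the paper's use of $Q(\phi^t)/Q(\phi)=Q(f)$ modulo squares, though you should note that the squareness of the non-divisible part of $\Sha[p^\infty]$ comes from the Cassels--Tate pairing, not from a pairwise cancellation between $\phi$ and $\phi^t$.
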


\begin{remark} \label{rem:CFKS_parallel}
If $f=\phi^t\phi$ is multiplication by $p$, Theorem \ref{thm:main_local_formula} expresses the parity of the $p^\infty$-Selmer rank $\textup{rk}_p(A/K)$ as a sum of local terms:
\[(-1)^{\textup{rk}_p(A/K)}=\prod_{v\mid \infty}(-1)^{ \dim A}\cdot \prod_{v\nmid \infty}(-1)^{\textup{rk}\mathfrak{X}_{A,v}^{\textup{Fr}_v}}.\]
For each place $v$ of $K$, the contribution from $v$ to the right hand side of this formula is equal to the local root number $w(A/K_v)$ (for nonarchimedean places see \cite[Proposition 3.26]{MR2534092} and for archimedean places see \cite[Lemma 2.1]{MR2309184}).  In particular, Theorem \ref{thm:main_local_formula} immediately implies the $p$-parity conjecture for $A$. Having assumed that $A$ admits an isogeny $\phi$ splitting multiplication by $p$, this was known previously by work of Coates--Fukaya--Kato--Sujatha \cite{MR2551757}, and a direct proof of Theorem \ref{thm:main_local_formula} in this case can be extracted from their work.  The value of Theorem \ref{thm:main_local_formula} is that it applies to more general isogenies. 
\end{remark}

 \subsection{A local formula}
To prove Theorem \ref{thm:main_local_formula}, we first establish a purely local formula, which may be of independent interest, before combining it with standard arithmetic duality theorems to deduce the result. Specifically, we prove the following. In the statement, for a local field $F$, an odd prime $p$, and a character $\chi:G_F=\textup{Gal}(\overline{F}/F)\rightarrow \mathbb{F}_p^{\times}$, we denote by $(-1,\chi)_F\in \{\pm 1\}$ the result of evaluating $\chi$ at $-1\in F^{\times}$ via local class field theory.   

\begin{theorem} \label{main_thm_local}
Let $F$ be a local field of characteristic $0$. Let $A/F$ and $B/F$ be principally polarised abelian varieties, let $\phi:A\rightarrow B$ be an isogeny and let $\phi^t:B\rightarrow A$ be the corresponding dual isogeny. Write $f=\phi^t\phi\in \textup{End}(A)$ and let $p$ be an odd prime. If $F$ is nonarchimedean with residue characteristic different from $p$ then suppose that $A$ has semistable reduction, while if $F$ has residue characteristic $p$ suppose that $A$ has good ordinary reduction. 
Then we have
\[ (-1)^{\textup{ord}_p \frac{\#B(F)/\phi A(F)}{\#A(F)[\phi]}}\cdot \big(-1,\chi_{A[\phi],p}\big)_{F}=   \begin{cases}  (-1)^{\textup{ord}_p \det(f \mid \Omega^1(A/F))}~~&~~F\textup{ archimedean},\\   (-1)^{\textup{ord}_p \det(f \mid \mathfrak{X}_A^{\textup{Fr}}\otimes \mathbb{Q}_p)}~~&~~F \textup{ nonarchimedean} ,\end{cases} \]
where $\chi_{A[\phi],p}:G_{F}\rightarrow \mathbb{F}_p^{\times}$ is the character of Definition \ref{def:the_galois_character}.
 \end{theorem}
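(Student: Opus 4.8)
The plan is to reduce the claimed identity to a statement about local Tate pairings and the behaviour of the Cassels–Tate / Poitou–Tate local duality under the isogeny $\phi$, treating the archimedean, $p$-adic, and residue-characteristic-away-from-$p$ cases separately. The left-hand side groups two quantities: the local Tamagawa-type ratio $\#B(F)/\phi A(F)$ over $\#A(F)[\phi]$, whose $p$-valuation parity is governed by a local Euler-characteristic computation for the $G_F$-module $A[\phi]$, and the class-field-theory symbol $(-1,\chi_{A[\phi],p})_F$, which records the effect of the polarisation-induced (anti)symmetry of the local pairing on $A[\phi]$. So the first step is to write $\mathrm{ord}_p\frac{\#B(F)/\phi A(F)}{\#A(F)[\phi]}$ in terms of $\dim_{\mathbb{F}_p}$ of appropriate cohomology groups $H^i(F,A[\phi]_p)$, where $A[\phi]_p$ is the $p$-part of $A[\phi]$, using the local Euler characteristic formula; for $F$ nonarchimedean not above $p$ this kills the Euler characteristic and leaves a boundary contribution, for $F$ above $p$ one picks up $[F:\mathbb{Q}_p]\dim_{\mathbb{F}_p} A[\phi]_p$, and for $F$ archimedean one gets a Herbrand-quotient-type expression.

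Second, I would identify the right-hand sides. For $F$ archimedean, $\det(f\mid \Omega^1(A/F)) = \pm\deg\phi$ as stated, and since $\deg\phi = \#A[\phi]$ one reads off $\mathrm{ord}_p$ of it as $\dim_{\mathbb{F}_p}A[\phi]_p$; comparing with the real/complex local Euler characteristic and the archimedean symbol $(-1,\chi)_{\mathbb{R}}$ (which detects a sign coming from $\pi_0$ of the real points) should close that case by a direct count. For $F$ nonarchimedean, I would relate $\det(f\mid \mathfrak{X}_A^{\mathrm{Fr}}\otimes\mathbb{Q}_p)$ to the action of $f=\phi^t\phi$ on the character group of the toric part of the Néron model, using the known description (via the monodromy/component-group filtration, e.g. work of Grothendieck in SGA 7) of $A(F)$, $B(F)$ and the isogeny in terms of the lattices $\mathfrak{X}_A$, $\mathfrak{X}_B$ and the toric and abelian quotients. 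The key point is that for the toric and good-reduction pieces the local duality pairing is symmetric/alternating in a way controlled by the polarisation, so the symbol $(-1,\chi_{A[\phi],p})_F$ precisely accounts for the discrepancy between $\det f$ on the full module and its effect on the Frobenius-fixed part.

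Third, for the case of residue characteristic $p$ with good ordinary reduction, I would use the connected–étale sequence of $A[\phi]$ over the ring of integers: the ordinary hypothesis splits $A[p^\infty]$ (up to finite index) into a multiplicative part and an étale part that are Cartier/Weil dual to one another, and the formal-group part of $A(F)$ contributes the $[F:\mathbb{Q}_p]\dim A[\phi]_p$ term, matching the Euler-characteristic computation, while the unramified (étale) part and the character $\chi_{A[\phi],p}$ interact through local class field theory exactly as in the symbol on the left. The main obstacle I expect is bookkeeping the signs and indices uniformly: making the definition of $\chi_{A[\phi],p}$ (Definition \ref{def:the_galois_character}) interact correctly with the polarisation-twisted Weil pairing on $A[\phi]$, so that the symbol $(-1,\chi_{A[\phi],p})_F$ is exactly the correction term and not its inverse or a shifted variant. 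I would handle this by first doing the case $\phi=$ multiplication by $p$ (where the Weil pairing is the classical alternating one and the statement should match the Coates–Fukaya–Kato–Sujatha computation recalled in Remark \ref{rem:CFKS_parallel}), and then isolating the extra contribution of a general $\phi$ via multiplicativity of all the terms in short exact sequences $0\to A[\phi]\to A[\phi^t\phi]\to B[\phi^t]\to 0$ and compatibility of the pairings with $\phi$ and $\phi^t$.
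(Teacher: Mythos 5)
Your plan has the right shape at the archimedean place and at residue characteristic $p$ (connected--\'etale sequence, the multiplicative part interacting with the symbol via the cyclotomic character is essentially what Proposition \ref{main_eq_res_prop} does), but it has genuine gaps at its core. First, the quantity $\tfrac{\#B(F)/\phi A(F)}{\#A(F)[\phi]}$ is \emph{not} computed by a local Euler characteristic of the Galois module $A[\phi]$: the Euler characteristic formula gives $\#H^0\cdot\#H^2/\#H^1$, whereas $B(F)/\phi A(F)$ is the image of the Kummer map inside $H^1(F,A[\phi])$, and its size depends on N\'eron-model data that $A[\phi]$ alone does not see. The correct input is Schaefer's formula (Lemma \ref{size_of_coker_tamagawa}), which introduces the Tamagawa ratio $c(B/F)/c(A/F)$ and the formal-group determinant $|\det D(\widehat{\phi})|_F$. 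Second, and most seriously, the heart of the nonarchimedean case is the congruence $\textup{ord}_p\,c(A/F)/c(B/F)\equiv \textup{ord}_p\det\big(f\mid \mathfrak{X}_A^{\textup{Fr}}\big)\pmod 2$ (Proposition \ref{Tam_comp_prop}). Your proposal attributes the discrepancy between $\det f$ on the full lattice and on the Frobenius-fixed part to the symbol $(-1,\chi_{A[\phi],p})_F$; that cannot work, since away from $p$ the character $\chi_{A[\phi],p}$ is unramified and the symbol is trivial (Lemma \ref{chi_unram_trivial_lemma}). In the paper this discrepancy is a component-group phenomenon: the monodromy pairing exhibits $\Phi_A(\bar k)$ as the discriminant group of the lattice $\mathfrak{X}_A$, and one needs the nontrivial lattice-theoretic statement (Proposition \ref{main_size_prop}), whose proof requires the Betts--Dokchitser input that the group $\mathfrak{B}_{\Lambda}$ carries an antisymmetric perfect pairing and hence has square order at odd $p$ (Proposition \ref{order_of_betts_group}). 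Nothing in your outline supplies this ingredient or a substitute for it.

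Your fallback strategy --- prove the case $\phi=[p]$ (matching Coates--Fukaya--Kato--Sujatha) and then deduce the general case by multiplicativity in short exact sequences of kernels --- also does not close the argument. Multiplicativity only shows that if $\psi\phi=[p]$ then the product of the statements for $\phi$ and $\psi$ equals the statement for $[p]$; it cannot isolate the statement for $\phi$ itself, and general isogenies are exactly the point of the theorem (cf.\ Remark \ref{rem:CFKS_parallel}). Relatedly, at residue characteristic $p$ the formal-group contribution is not $[F:\mathbb{Q}_p]\dim_{\mathbb{F}_p}A[\phi]_p$ but $[F:\mathbb{Q}_p]\cdot n$ where $\ker(\widehat{\phi})\cong\oplus_i\mu_{p^{n_i}}$ over the maximal unramified extension and $n=\sum_i n_i$: only the connected (multiplicative) part of the kernel enters, the \'etale quotient being unramified and invisible to both sides (Lemma \ref{lem:semisimple}); this is where the good ordinary hypothesis and the identification of the formal group with $\widehat{\mathbb{G}}_m^g$ are actually used.
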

 
 \begin{remark}
Under the assumptions on the reduction of $A$ present in Theorem \ref{main_thm_local},  the quantity $\big(-1,\chi_{A[\phi],p}\big)_{F}$ is trivial unless $F\cong \mathbb{R}$ or $F$ has residue characteristic $p$ (see Lemma \ref{chi_unram_trivial_lemma} for the proof). We remark also that, in light of the root number formulae referred to in Remark \ref{rem:CFKS_parallel}, the local formula above is a close analogue of \cite[Theorem 2.7]{MR2551757} and \cite[Theorem 3]{MR2389862}. 
 \end{remark}

\subsection{Layout of the paper}
Sections \ref{sec_2} and \ref{sec_3} contain preliminary material and are purely algebraic in nature. In Section \ref{sec_2} we define and study certain characters associated to finite Galois modules. In particular, we define the character $\chi_{A[\phi],p}$ appearing in the statement  of Theorem \ref{main_thm_local}. Section \ref{sec_3} contains results of a lattice-theoretic nature, with Proposition \ref{main_size_prop} playing a key role in the proof of Theorem \ref{main_thm_local}. In Section \ref{sec_4}, we prove the local formula presented as Theorem \ref{main_thm_local}, drawing on the results of Sections \ref{sec_2} and \ref{sec_3}. In Section \ref{sec_5}, we prove Theorem \ref{thm:main_local_formula} by combining Theorem \ref{main_thm_local} with standard global duality theorems. In Section \ref{sec_6}, we combine Theorem \ref{main_thm_local} with various results from \cite{DGKM2024} to prove Theorem \ref{thm:termcompat_into}. We also prove an auxiliary result that was advertised in \cite{DGKM2024}, comparing the local root numbers $w(X^\tau/K_v)$ defined in that work to  certain `arithmetic' local constants. We present this latter result as  Proposition \ref{prop:tamagawa_regulator}.

\subsection{Acknowledgments}
We would like to thank Alex Bartel, Vladimir Dokchitser, Omri Faraggi, Holly Green and Alexandros Konstantinou for helpful conversations and correspondence.

This work has been supported by the Engineering and Physical Sciences Research Council (EPSRC) grant EP/V006541/1 ‘Selmer groups, Arithmetic Statistics and Parity Conjectures’.
 
\section{Characters defined from finite abelian $p$-groups} \label{sec_2}

Let $p$ be an odd prime, let $n\geq 1$ be an integer, and let $M$ be a finite abelian $p$-group of exponent $p^n$. Consider the $\textup{Aut}(M)$-stable filtration 
\[0\subseteq p^{n-1}M[p^n]\subseteq ...\subseteq pM[p^2]\subseteq M[p]\]
 of $M[p]$. 
 
 \begin{definition} \label{defi_the_character}
 Let $0\leq i\leq n-1$. Define the homomorphism
 \[\chi_{M,i}:\textup{Aut}(M)\longrightarrow \textup{GL}\big(p^iM[p^{i+1}]\big)\stackrel{\textup{det}}{\longrightarrow}\mathbb{F}_p^{\times}.\]
Here $\textup{GL}\big(p^iM[p^{i+1}]\big)$ is the group of $\mathbb{F}_p$-linear automorphisms of $p^iM[p^{i+1}]$, the first map is induced by the natural action of $\textup{Aut}(M)$ on  $p^iM[p^{i+1}]$, and the second map is the determinant. Define the character $\chi_M:\textup{Aut}(M)\rightarrow \mathbb{F}_p^{\times}$ by setting 
\[\chi_M=\prod_{i=0}^{n-1}\chi_{M,i}.\]
\end{definition}

\begin{remark} \label{rem:torsion_filtration}  \label{semisimple_remark}
If $M$ is an $\mathbb{F}_p$-vector space, then $\chi_M$ is the map sending an automorphism to its determinant. More generally, we can consider the filtration 
\begin{equation} \label{torsion_filtration}
0\subseteq M[p]\subseteq M[p^2]\subseteq M[p^3]\subseteq ...\subseteq M[p^n]=M
\end{equation}
of $M$, whose graded pieces are
\[M[p^{i+1}]/M[p^i]\cong p^iM[p^{i+1}].\]
Then $M':=\bigoplus_{i=0}^{n-1}M[p^{i+1}]/M[p^i]$ is a finite dimensional $\mathbb{F}_p$-vector space, and $\chi_M$ is the homomorphism sending an automorphism of $M$ to the determinant of its action on $M'$. 
To push this further, write $G= \textup{Aut}(M)$ and view $M$ as an $R:=(\mathbb{Z}/p^n)[G]$-module in the obvious way. We can refine the filtration \eqref{torsion_filtration} to a composition series for $M$ as an $R$-module. If $M_{\textup{ss}}$ denotes the semisimplification of $M$ (i.e. the direct sum of the composition factors), then we see that $M_{\textup{ss}}$ is an $\mathbb{F}_p$-vector space and that, for any $\sigma \in G$, the quantity $\chi_M(\sigma)\in \mathbb{F}_p^{\times}$ is the determinant of the action of $\sigma$ on $M_{\textup{ss}}$. By the Jordan--H{\"o}lder theorem, the isomorphism class of $M_{\textup{ss}}$ as an $R$-module is independent of the choice of composition series, so we may use any composition series to compute $M_{\textup{ss}}$ and, from this, the character $\chi_M$.
\end{remark}

\begin{lemma} \label{order_2_element_lemma}
Let $\sigma \in \textup{Aut}(M)$ have order $2$. Then 
\[\chi_M(\sigma)=(-1)^{\textup{ord}_p\frac{\#M}{\#M^\sigma}}.\]
\end{lemma}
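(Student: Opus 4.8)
The plan is to reduce the statement to a determinant computation on an $\mathbb{F}_p$-vector space, exploiting that $2$ is invertible modulo $p$ since $p$ is odd. First I would split $M$ into $\sigma$-eigenspaces: because $p\neq 2$, the element $\tfrac12\in\mathbb{Z}/p^n$ exists, so the idempotents $e_{\pm}=\tfrac{1\pm\sigma}{2}$ lie in $R=(\mathbb{Z}/p^n)[\langle\sigma\rangle]$ and give a decomposition $M=M^{+}\oplus M^{-}$ of $R$-modules, where $M^{\pm}=e_{\pm}M$ is the subgroup on which $\sigma$ acts by $\pm 1$. In particular $M^{+}=M^{\sigma}$, and $\#M^{-}=\#M/\#M^{\sigma}$.

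Next I would invoke the description of $\chi_M$ from Remark \ref{rem:torsion_filtration}: $\chi_M(\sigma)$ equals the determinant of the action of $\sigma$ on the finite-dimensional $\mathbb{F}_p$-vector space $M'=\bigoplus_{i=0}^{n-1}M[p^{i+1}]/M[p^i]$. The formation of $M'$ is additive in $M$, since $M[p^k]=M^{+}[p^k]\oplus M^{-}[p^k]$ for every $k$; hence $M'=(M^{+})'\oplus(M^{-})'$ compatibly with the $\sigma$-action. On $(M^{+})'$ the automorphism $\sigma$ acts trivially and contributes $1$ to the determinant, while on $(M^{-})'$ it acts as $-1$ and contributes $(-1)^{\dim_{\mathbb{F}_p}(M^{-})'}$. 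Therefore $\chi_M(\sigma)=(-1)^{\dim_{\mathbb{F}_p}(M^{-})'}$.

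Finally I would record the elementary fact that for any finite abelian $p$-group $N$ one has $\dim_{\mathbb{F}_p}N'=\textup{ord}_p\#N$: the dimensions of the graded pieces $N[p^{i+1}]/N[p^i]$ telescope to $\log_p\#N[p^n]-\log_p\#N[p^0]=\log_p\#N$. Applying this with $N=M^{-}$ and using $\#M^{-}=\#M/\#M^{\sigma}$ gives $\dim_{\mathbb{F}_p}(M^{-})'=\textup{ord}_p\bigl(\#M/\#M^{\sigma}\bigr)$, which is exactly the claimed identity.

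I do not anticipate a real obstacle here. The only points needing care are that the eigenspace decomposition is legitimate — this is precisely where the hypothesis $p\neq 2$ enters — and that the passage $M\mapsto M'$ respects direct sums and the induced $\sigma$-action, which is immediate from the $\textup{Aut}(M)$-stability of the $p$-torsion filtration. One could avoid the idempotents altogether by observing that $\sigma$ acts semisimply on the $\mathbb{F}_p$-vector space $M'$ (as $\sigma^2=1$ and $p\neq 2$), so $\det(\sigma\mid M')=(-1)^{\dim M'-\dim (M')^{\sigma}}$, and then identifying $(M')^{\sigma}$ with $(M^{\sigma})'$; the bookkeeping is the same.
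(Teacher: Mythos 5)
Your proof is correct and rests on the same mechanism as the paper's: semisimplicity of the order-$2$ action in odd characteristic, i.e.\ the $\pm 1$-eigenspace decomposition and a dimension count mod $2$. The only organisational difference is that you split $M$ itself via the idempotents $\tfrac{1\pm\sigma}{2}$ and then pass to the graded pieces, whereas the paper first passes to the graded pieces $p^iM[p^{i+1}]$ (using that taking $\sigma$-invariants is exact because $|\langle\sigma\rangle|$ is coprime to $p$) and splits each of those into eigenspaces; the bookkeeping is equivalent.
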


\begin{proof}
Let $G$ be the subgroup of $\textup{Aut}(M)$-generated by $\sigma$. As above, for each $1\leq i\leq n-1$, we have a short exact sequence of $G$-modules
\[0\longrightarrow M[p^i]\longrightarrow M[p^{i+1}]\stackrel{p^i}{\longrightarrow} p^iM[p^{i+1}]\longrightarrow 0.\]
Since $G$ has order coprime to $p$, these sequences remain exact after taking $G$-invariants. Thus
\[\frac{\#M}{\#M^{\sigma}}=\frac{\#M[p^n]}{\#H^0(G,M[p^n])}=\prod_{i=0}^{n-1}\frac{\#p^iM[p^{i+1}]}{\#H^0(G,p^iM[p^{i+1}])}.\]
Fix $0\leq i\leq n-1$ and denote by $V$ the $\mathbb{F}_p$-vector space $p^iM[p^{i+1}]$. Since $G$ has order $2$, we have $V=V_{1}\oplus V_{-1}$ where $\sigma$ acts on $V_1$ trivially and on $V_{-1}$ as multiplication by $-1$. Then 
\[(-1)^{\textup{ord}_p\frac{p^iM[p^{i+1}]}{\#H^0(G,p^iM[p^{i+1}])}}=(-1)^{\dim_{\mathbb{F}_p}V_{-1}}=\det(\sigma \mid V)=\chi_{M,i}(\sigma).\]
Taking the product of this equality over $0\leq i\leq n-1$ gives the result. 
\end{proof}

\begin{definition} \label{def:the_galois_character}
When $F$ is a field of characteristic different from $p$, and $M$ is a $G_F=\textup{Gal}(F^{\textup{sep}}/F)$-module, by an abuse of notation we denote by $\chi_M:G_F\rightarrow \mathbb{F}_p^{\times}$ the composition
\[G_F\longrightarrow \textup{Aut}(M)\stackrel{\chi_M}{\longrightarrow}\mathbb{F}_p^{\times},\]
with the first map arising from the action of $G_F$ on $M$. If $M$ is a finite $G_F$-module but is not necessarily a $p$-group, then we define $\chi_{M,p}:G_F\rightarrow \mathbb{F}_p^{\times}$ to be the character $\chi_{M[p^\infty]}$ associated to the $p$-primary part of $M$.
\end{definition}

\begin{lemma} \label{lem:semisimple}
Let $p$ be a prime and suppose we have a short exact sequence 
\[0\rightarrow M_1\stackrel{f_1}{\longrightarrow} M \stackrel{f_2}{\longrightarrow} M_2\rightarrow 0\] of finite $G_F$-modules. Then 
$\chi_{M,p}=\chi_{M_1,p}\cdot \chi_{M_2,p}.$ 
\end{lemma}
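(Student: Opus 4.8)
The plan is to reduce the claim to the semisimplification description of $\chi_{-,p}$ given in Remark \ref{semisimple_remark}. First I would reduce to the case where all modules are $p$-groups: since $\chi_{M,p}$ by definition only depends on $M[p^\infty]$, and taking $p$-primary parts is exact on finite abelian groups, the short exact sequence $0\to M_1\to M\to M_2\to 0$ restricts to a short exact sequence $0\to M_1[p^\infty]\to M[p^\infty]\to M_2[p^\infty]\to 0$ of $G_F$-modules. So we may assume $M_1,M,M_2$ are finite $p$-groups and must show $\chi_M=\chi_{M_1}\cdot\chi_{M_2}$.

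Now let $n$ be large enough that all three modules have exponent dividing $p^n$, set $R=(\mathbb{Z}/p^n)[G]$ where $G$ is the image of $G_F$ in $\mathrm{Aut}(M)$ (or more cleanly, just work with the $G_F$-action directly, regarding each module as a module over the appropriate group ring), and invoke Remark \ref{semisimple_remark}: for $\sigma\in G_F$, $\chi_M(\sigma)$ is the determinant of $\sigma$ acting on the semisimplification $M_{\mathrm{ss}}$, an $\mathbb{F}_p$-vector space, and similarly for $M_1$ and $M_2$. The key input is then the Jordan--Hölder theorem: a composition series of $M_1$ together with the pullback of a composition series of $M_2$ assembles into a composition series of $M$, so the multiset of composition factors of $M$ is the disjoint union of those of $M_1$ and of $M_2$. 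Hence $M_{\mathrm{ss}}\cong (M_1)_{\mathrm{ss}}\oplus (M_2)_{\mathrm{ss}}$ as $G_F$-modules (indeed as $R$-modules), and taking determinants of the $\sigma$-action, which is multiplicative on direct sums, gives $\chi_M(\sigma)=\chi_{M_1}(\sigma)\cdot\chi_{M_2}(\sigma)$ for every $\sigma$.

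The only mild subtlety — and the one point I would spell out — is that in Remark \ref{semisimple_remark} the group ring is taken with respect to $G=\mathrm{Aut}(M)$, whereas here the natural acting group is $G_F$ (or its images in the three automorphism groups, which need not coincide). This is harmless: everything goes through verbatim with $(\mathbb{Z}/p^n)[G_F]$-modules in place of $(\mathbb{Z}/p^n)[\mathrm{Aut}(M)]$-modules, since $\chi_M(\sigma)$ for $\sigma\in G_F$ depends only on the image of $\sigma$ in $\mathrm{Aut}(M)$ and the semisimplification computation is insensitive to enlarging the acting ring. I do not anticipate a real obstacle here; the argument is essentially the observation that $\chi_{-,p}$ factors through the Grothendieck group of finite $p$-primary $G_F$-modules, on which short exact sequences are additive by construction.
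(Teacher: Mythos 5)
Your argument is correct and is essentially the paper's own proof: reduce to $p$-groups via $M[p^\infty]$, then use the semisimplification description of Remark \ref{semisimple_remark} together with $M_{\mathrm{ss}}\cong (M_1)_{\mathrm{ss}}\oplus (M_2)_{\mathrm{ss}}$ and multiplicativity of determinants. The point you flag about the acting group is handled in the paper exactly as you suggest, by passing to the image of $G_F$ in $\mathrm{Aut}(M)$ and viewing the sequence as one of $(\mathbb{Z}/p^n)[G]$-modules.
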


\begin{proof}
Replacing $M$ by $M[p^\infty]$ we can assume that $M$ is a $p$-group, say with exponent $p^n$. Let $G$ be the image of $G_F$ in $\textup{Aut}(M)$, so that the sequence in the statement is a short exact sequence of $R=(\mathbb{Z}/p^n)[G]$-modules. Arguing as in Remark \ref{semisimple_remark}, we see that the semisimplification $M_{\textup{ss}}$ of $M$ as an $R$-module is an $\mathbb{F}_p$-vector space, and that, for any $\sigma \in G_F$, the value of $\chi_M(\sigma)$ is equal to the determinant of $\sigma$ acting on $M_{\textup{ss}}$. Since $M_{\textup{ss}}=(M_1)_{\textup{ss}}\oplus (M_2)_{\textup{ss}}$, the result follows.
\end{proof}

\begin{example} \label{mu_example}
Take $M=\oplus_{i=1}^r\mu_{p^{m_j}}$, where $\mu_{p^{m_j}}$ denotes the $G_F$-module of $p^{m_j}$-th roots of unity in the algebraic closure $\overline{F}$. It follows immediately from Lemma \ref{lem:semisimple} that $\chi_M=\chi_{\textup{cyc},p}^t$, where $t=\sum_{i=1}^rm_j$, and $\chi_{\textup{cyc},p}:G_F\rightarrow \mathbb{F}_p^{\times}$ is the mod $p$ cyclotomic character arising from the action of $G_F$ on $\mu_p$.
\end{example}

\section{Preliminary results on lattices} \label{lattice_results} \label{sec_3}

The principal aim of this section is to prove the lattice-theoretic Proposition \ref{main_size_prop}, which forms a key part of the proof of Theorem \ref{main_thm_local}.

\begin{notation}
Let $X_1$ and $X_2$ be abelian groups and let $\phi:X_1\rightarrow X_2$ be a homomorphism with finite kernel and cokernel. We refer to such a map $\phi$ as an isogeny, and   define 
\[z(\phi):=\#\textup{coker}(\phi)/\#\ker(\phi).\]
\end{notation}
The basic properties of the function $z$ are summarised in \cite[Lemma I.7.2]{MR2261462}, though we caution that the function denoted $z$ there is the inverse of ours. We note in particular that if $X_1$ and $X_2$ are finite, then $z(\phi)=|X_2|/|X_1|$. Moreover, the function $z$ is multiplicative in short exact sequences (as follows from the Snake lemma), and given isogenies 
\[X_1\stackrel{\phi}{\longrightarrow}X_2\stackrel{\phi'}{\longrightarrow}X_3,\]
the composition $\phi'\circ \phi$ is an isogeny  and $z(\phi'  \phi)=z(\phi')z(\phi)$.
 
Suppose now that we have finite-rank free $\mathbb{Z}$-modules $\Lambda_A$ and $\Lambda_B$, and an isogeny  $\phi:\Lambda_A \rightarrow \Lambda_B$. Let $\Lambda_A^\vee=\textup{Hom}(\Lambda_A,\mathbb{Z})$, and define $\Lambda_B^\vee$ similarly. Let $\phi^\vee:\Lambda_B^\vee \rightarrow \Lambda_A^\vee$ denote the dual isogeny.

\begin{lemma} \label{basic properties of z}
We have $z(\phi)=z(\phi^\vee)$.
\end{lemma}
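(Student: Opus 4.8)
The plan is to reduce everything to a statement about a single integer matrix and its transpose. First I would record two elementary observations. Since $\Lambda_A$ is torsion-free while $\ker(\phi)$ is finite, $\phi$ is injective; and $\phi^\vee$ is injective as well, because its kernel is $\textup{Hom}(\textup{coker}(\phi),\mathbb{Z})=0$ (the cokernel being finite). In particular $\textup{rk}\,\Lambda_A=\textup{rk}\,\Lambda_B=:n$, and by definition $z(\phi)=\#\textup{coker}(\phi)$ and $z(\phi^\vee)=\#\textup{coker}(\phi^\vee)$, so the claim is just that these two cokernels have the same cardinality.

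Next I would fix $\mathbb{Z}$-bases of $\Lambda_A$ and $\Lambda_B$ and let $T\in M_n(\mathbb{Z})$ be the matrix of $\phi$. Since $\phi$ is injective, $\det T\neq 0$, and the standard index formula (or Smith normal form) gives $\#\textup{coker}(\phi)=[\Lambda_B:T\Lambda_A]=|\det T|$. With respect to the dual bases of $\Lambda_B^\vee$ and $\Lambda_A^\vee$, the matrix of $\phi^\vee$ is the transpose $T^{\mathsf{T}}$, so likewise $\#\textup{coker}(\phi^\vee)=|\det T^{\mathsf{T}}|=|\det T|$. Comparing the two expressions yields $z(\phi)=z(\phi^\vee)$.

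As an alternative (which also makes clear that $\textup{coker}(\phi^\vee)$ is the Pontryagin dual of $\textup{coker}(\phi)$), one can argue homologically: applying $\textup{Hom}(-,\mathbb{Z})$ to the short exact sequence $0\to\Lambda_A\stackrel{\phi}{\to}\Lambda_B\to\textup{coker}(\phi)\to 0$ and using $\textup{Hom}(\textup{coker}(\phi),\mathbb{Z})=0$ together with $\textup{Ext}^1(\Lambda_B,\mathbb{Z})=0$ gives $\textup{coker}(\phi^\vee)\cong\textup{Ext}^1(\textup{coker}(\phi),\mathbb{Z})$, and $\textup{Ext}^1(C,\mathbb{Z})$ has the same order as $C$ for any finite abelian group $C$ (reduce to $C=\mathbb{Z}/m$).

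There is no genuine obstacle in this lemma: the only points needing (routine) verification are that $\phi$ and $\phi^\vee$ are automatically injective, that passing to duals of maps between free modules corresponds to transposing the representing matrix, and the index-equals-$|\det|$ formula — all of which are standard.
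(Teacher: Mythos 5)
Your argument is correct and is essentially the paper's proof: both fix bases, observe that the matrix of $\phi^\vee$ in the dual bases is the transpose of that of $\phi$, and use the Smith normal form/index-equals-$|\det|$ fact together with injectivity of $\phi$ and $\phi^\vee$. The extra justifications (why $\phi^\vee$ is injective, and the homological aside) are fine but not needed beyond what the paper already records.
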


\begin{proof}
Fix bases for $\Lambda_A$ and $\Lambda_B$ and let $M$ be the matrix of $\phi$ with respect to these bases. Then the transpose $M^t$ of $M$ is the matrix of $\phi^\vee$ with respect to the dual bases for $\Lambda_A^\vee$ and $\Lambda_B^\vee$. By properties of Smith normal form, we have 
\[\#\textup{coker}(\phi)=|\textup{det}(M)|=|\textup{det}(M^t)|= \#\textup{coker}(\phi^\vee).\]
Since both $\phi$ and $\phi^\vee$ are injective,  the result follows.
\end{proof}

Now suppose that $G$ is a finite cyclic group acting on $\Lambda_A$ and $\Lambda_B$, and suppose that $\phi$ is $G$-equivariant. Write $H^0(\phi)$ for the induced isogeny $\Lambda_A^G\rightarrow \Lambda_B^G$.

\begin{lemma} \label{main prelim lattice lemma}
We have
\[\frac{z(H^0(\phi))}{z(H^0(\phi^\vee)) }=\frac{\#H^1(G,\Lambda_B) }{\# H^1(G,\Lambda_A) }.\]
\end{lemma}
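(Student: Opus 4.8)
The plan is to deduce the identity from three ingredients: the multiplicativity of the function $z$ in short exact sequences and in compositions, the compatibility of $\mathrm{Hom}(-,\mathbb{Z})$ with the passage between $G$-invariants and $G$-coinvariants, and the $2$-periodicity of Tate cohomology for the cyclic group $G$. Since $\Lambda_A$ is $\mathbb{Z}$-free and $\phi$ has finite kernel, $\phi$ is injective; writing $C=\mathrm{coker}(\phi)$, a finite $G$-module, we obtain a short exact sequence $0\to\Lambda_A\xrightarrow{\phi}\Lambda_B\to C\to 0$ of $\mathbb{Z}[G]$-modules. Throughout, for a $\mathbb{Z}[G]$-module $\Lambda$ I write $\Lambda_G$ for its coinvariants, $\phi_G\colon(\Lambda_A)_G\to(\Lambda_B)_G$ for the map induced by $\phi$, and $\Lambda^\vee=\mathrm{Hom}(\Lambda,\mathbb{Z})$.

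First I would show that $z(H^0(\phi))=z(\phi_G)$. The norm element $N_G=\sum_{g\in G}g$ induces, functorially in $\Lambda$, a map $\bar N_\Lambda\colon\Lambda_G\to\Lambda^G$; over $\mathbb{Q}$ this is an isomorphism onto the invariants, so each $\bar N_\Lambda$ is an isogeny with $z(\bar N_\Lambda)=\#\hat{H}^0(G,\Lambda)/\#\hat{H}^{-1}(G,\Lambda)$. As $G$ is cyclic, $\#\hat{H}^{-1}(G,\Lambda)=\#\hat{H}^1(G,\Lambda)$, so $z(\bar N_\Lambda)$ equals the Herbrand quotient $h(\Lambda):=\#\hat{H}^0(G,\Lambda)/\#\hat{H}^1(G,\Lambda)$; since $h$ is multiplicative in short exact sequences and equals $1$ on the finite module $C$, we get $z(\bar N_{\Lambda_A})=h(\Lambda_A)=h(\Lambda_B)=z(\bar N_{\Lambda_B})$. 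The naturality identity $\bar N_{\Lambda_B}\circ\phi_G=H^0(\phi)\circ\bar N_{\Lambda_A}$ together with $z(\psi'\psi)=z(\psi')z(\psi)$ then forces $z(\phi_G)=z(H^0(\phi))$.

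Next I would bring in duality. The natural isomorphism $\mathrm{Hom}_{\mathbb{Z}}(\Lambda_G,\mathbb{Z})=\mathrm{Hom}_{\mathbb{Z}[G]}(\Lambda,\mathbb{Z})=(\Lambda^\vee)^G$ identifies $(\phi_G)^\vee$ with $H^0(\phi^\vee)\colon(\Lambda_B^\vee)^G\to(\Lambda_A^\vee)^G$. For any isogeny $\psi\colon X_1\to X_2$ of finitely generated abelian groups, splitting off the (preserved) torsion subgroups and applying Lemma \ref{basic properties of z} to the induced isogeny of free quotients gives $z(\psi)/z(\psi^\vee)=\#(X_2)_{\mathrm{tors}}/\#(X_1)_{\mathrm{tors}}$. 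Taking $\psi=\phi_G$ yields
\[\frac{z(\phi_G)}{z(H^0(\phi^\vee))}=\frac{\#\big((\Lambda_B)_G\big)_{\mathrm{tors}}}{\#\big((\Lambda_A)_G\big)_{\mathrm{tors}}}.\]
Finally, since $\Lambda^G$ is torsion-free while $\bar N_\Lambda\otimes\mathbb{Q}$ is injective, $(\Lambda_G)_{\mathrm{tors}}=\ker(\bar N_\Lambda)=\hat{H}^{-1}(G,\Lambda)$, which for cyclic $G$ has the same order as $\hat{H}^1(G,\Lambda)=H^1(G,\Lambda)$. Substituting into the display and using $z(\phi_G)=z(H^0(\phi))$ gives the claimed equality.

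The steps where cyclicity of $G$ is essential — and where I would expect to need the most care — are the two appeals to $2$-periodicity of Tate cohomology (the identification $z(\bar N_\Lambda)=h(\Lambda)$, and the equality $\#\hat{H}^{-1}(G,\Lambda)=\#H^1(G,\Lambda)$ used to compute the torsion of the coinvariants); the remaining steps are formal manipulation of $z$ and of the invariants/coinvariants functors.
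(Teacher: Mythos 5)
Your proof is correct and is essentially the paper's argument: both start from the identification $(\Lambda^\vee)^G=\mathrm{Hom}(\Lambda/\Delta\Lambda,\mathbb{Z})$, combine Lemma \ref{basic properties of z} with the multiplicativity of $z$ in compositions and short exact sequences, and finish using the equality of the Herbrand quotients of $\Lambda_A$ and $\Lambda_B$. The only difference is bookkeeping: the paper dualises through the norm sublattice $N(\Lambda)\subseteq\Lambda^G$, so the discrepancy appears as $\widehat{H}^0$, whereas you dualise through the coinvariants $\Lambda_G$, so the discrepancies appear as $\widehat{H}^{-1}$ (the torsion of $\Lambda_G$) and $\widehat{H}^0$ (via the norm isogeny $\Lambda_G\to\Lambda^G$), which is why you need the two extra appeals to $2$-periodicity of Tate cohomology for cyclic $G$ -- both routes are valid.
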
 

\begin{proof}
Let $\sigma$ denote a generator of $G$, write $\Delta=\sigma -1$ and $N=\sum_{g\in G}g$. Let $\Lambda \in \{\Lambda_A,\Lambda_B\}$. Note that \[(\Lambda^\vee)^G=\Lambda^\vee[\Delta]=\textup{Hom}\left(\Lambda/\Delta\Lambda,\mathbb{Z}\right).\]  Since $\Lambda[N]/\Delta\Lambda\cong H^1(G,\Lambda)$ is finite, any homomorphism  $\Lambda \rightarrow \mathbb{Z}$ that factors through $\Delta\Lambda$ necessarily  factors through $\Lambda[N]$ also. Thus we have   canonical isomorphisms
\[(\Lambda^\vee)^G\cong  \textup{Hom}\left(\Lambda/\Lambda[N],\mathbb{Z}\right)\cong  \textup{Hom}\left(N(\Lambda),\mathbb{Z}\right).\]
From this, we see that 
\begin{eqnarray*} \label{first step in lattice proof}
z(H^0(\phi^\vee))& =&\#\coker\Big(\phi^\vee:\textup{Hom}\left(N(\Lambda_B),\mathbb{Z}\right) \rightarrow \textup{Hom}\left(N(\Lambda_A),\mathbb{Z}\right)  \Big) \\&\stackrel{\textup{Lemma }\ref{basic properties of z}}{=}&\#\coker\Big(\phi:N(\Lambda_A)\rightarrow N(\Lambda_B)\Big).
\end{eqnarray*}
Now consider the commutative diagram with exact rows 
\[\xymatrix{0\ar[r] & N(\Lambda_A)\ar[r]\ar[d]^{\phi_1} & \Lambda_A^G\ar[d]^{\phi_2}\ar[r] & \widehat{H}^0(G,\Lambda_A)\ar[d]^{\phi_3}\ar[r] & 0\\ 0\ar[r] & N(\Lambda_B)\ar[r]^{} & \Lambda_B^G\ar[r]^{} & \widehat{H}^0(G,\Lambda_B)\ar[r] & 0}\]
in which each vertical map is induced by $\phi$, and where $\widehat{H}^0$ denotes the zeroth Tate cohomology group (defined as the quotient of the kernel of $\Delta$ by the image of $N$).
From this we conclude that 
\[\frac{z(H^0(\phi))}{z(H^0(\phi^\vee)) }=z(\phi_2)/z(\phi_1)=z(\phi_3)=\frac{\#\widehat{H}^0(G,\Lambda_B) }{\# \widehat{H}^0(G,\Lambda_A)}=\frac{\#H^1(G,\Lambda_B) }{\# H^1(G,\Lambda_A) }.\]
Here the first equality follows from the above formula for $z(H^0(\phi^\vee))$. For the third equality, we are using   finiteness of $\widehat{H}^0(G,\Lambda_A)$ and $\widehat{H}^0(G,\Lambda_B)$, and for the final equality we are using that  the Herbrand quotient of $\Lambda_A$ is equal to the Herbrand quotient of $\Lambda_B$. This latter fact follows from consideration of the short exact sequence of $G$-modules 
\[0\longrightarrow \Lambda_A \stackrel{\phi}{\longrightarrow}\Lambda_B\longrightarrow \Lambda_B/{\phi(\Lambda_A)}\longrightarrow 0,\]
 the fact that $\Lambda_B/\phi(\Lambda_A)$ is finite, and \cite[Propositions 10 and 11]{MR0219512}. 
\end{proof}

\begin{notation}
 Suppose further that, for $\star\in \{A,B\}$, we have a $G$-invariant non-degenerate bilinear pairing  
 \[\left \langle ~,~\right \rangle_\star  :\Lambda_\star \times \Lambda_\star \longrightarrow \mathbb{Z}.\] 
This pairing induces a homomorphism $\iota_\star:\Lambda_\star \rightarrow \Lambda_\star^\vee$, sending $\lambda \in \Lambda_\star$ to 
$ \left \langle -,\lambda \right \rangle_\star.$
We denote by
$\Phi_{\Lambda_\star}=\Lambda_\star^\vee/\Lambda_\star$
  the cokernel of this map. This is a finite abelian group often referred to as the discriminant group of the lattice, and inherits a natural $G$ action. Following  \cite[Definition 1.4.1]{MR3933907}, we also define the finite abelian group \[\mathfrak{B}_{\Lambda_\star}=\textup{im}\Big(H^1(G, \Lambda_\star)\stackrel{\iota_\star}{\longrightarrow} H^1(G,\Lambda_\star^\vee)\Big).\]
\end{notation}
The key fact we will use about this latter group is the following.  

\begin{proposition} \label{order_of_betts_group}
Let $p$ be an odd prime. Then for  $\star\in \{A,B\}$, we have
\[\textup{ord}_p \#\mathfrak{B}_{\Lambda_\star}\equiv 0 \pmod 2.\]
\end{proposition}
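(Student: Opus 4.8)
The plan is to exploit the fact that the pairing $\langle\ ,\ \rangle_\star$ on $\Lambda_\star$ induces, after tensoring with $\mathbb{Q}$, a perfect pairing, and hence $\iota_\star:\Lambda_\star\to\Lambda_\star^\vee$ is itself an isogeny of $G$-modules with finite cokernel $\Phi_{\Lambda_\star}$. I would first reduce to analysing the $p$-part of $\mathfrak{B}_{\Lambda_\star}$ through the long exact $G$-cohomology sequence attached to
\[0\longrightarrow \Lambda_\star \stackrel{\iota_\star}{\longrightarrow} \Lambda_\star^\vee \longrightarrow \Phi_{\Lambda_\star}\longrightarrow 0,\]
which gives $\mathfrak{B}_{\Lambda_\star}=\mathrm{coker}\big(H^0(G,\Lambda_\star^\vee)\to H^0(G,\Phi_{\Lambda_\star})\big)$ sitting inside the sequence $H^0(G,\Phi_{\Lambda_\star})\to H^1(G,\Lambda_\star)\to H^1(G,\Lambda_\star^\vee)$; so $\#\mathfrak{B}_{\Lambda_\star}$ divides $\#H^1(G,\Lambda_\star^\vee)$ and is the image of $H^1(\iota_\star)$, as in the definition.

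The crux is a symmetry/self-duality input: because the pairing is $G$-invariant and non-degenerate, the transpose $\iota_\star^\vee:\Lambda_\star^{\vee\vee}=\Lambda_\star\to\Lambda_\star^\vee$ of $\iota_\star$ equals $\pm\iota_\star$ (it is $\iota_\star$ itself if the pairing is symmetric, $-\iota_\star$ if alternating — in either case they agree up to a sign, hence induce the same map on cohomology since we only care about the $p$-part and $p$ is odd). Combined with the Poitou–Tate / cup-product duality for the finite cyclic group $G$ acting on the dual lattices, this forces the group $\mathfrak{B}_{\Lambda_\star}$ to carry a non-degenerate (anti)symmetric pairing valued in $\mathbb{Q}/\mathbb{Z}$ — concretely, one pairs $H^1(G,\Lambda_\star)$ with $H^1(G,\Lambda_\star^\vee)$ via cup product into $H^2(G,\Lambda_\star\otimes\Lambda_\star^\vee)\to H^2(G,\mathbb{Z})\cong \mathbb{Q}/\mathbb{Z}$, and the self-duality of $\iota_\star$ identifies this with a pairing on $\mathfrak{B}_{\Lambda_\star}$ itself. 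A finite abelian $p$-group ($p$ odd) admitting a non-degenerate (anti)symmetric $\mathbb{Q}/\mathbb{Z}$-valued pairing has square order, giving $\mathrm{ord}_p\#\mathfrak{B}_{\Lambda_\star}\equiv 0\pmod 2$.

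I would carry this out in the order: (1) set up the exact sequence and identify $\mathfrak{B}_{\Lambda_\star}$ with a cohomological cokernel/image; (2) record that $\iota_\star^\vee=\pm\iota_\star$ from non-degeneracy and $G$-invariance of the pairing; (3) construct the cup-product pairing on $H^1(G,-)$ of the lattices into $H^2(G,\mathbb{Z})$ and show, using (2), that it descends to a non-degenerate pairing on the $p$-part of $\mathfrak{B}_{\Lambda_\star}$; (4) invoke the elementary fact that a finite abelian group with a non-degenerate symmetric or alternating pairing into $\mathbb{Q}/\mathbb{Z}$ has order a perfect square when restricted to odd primes. The main obstacle I anticipate is step (3): verifying that the cup-product pairing is actually \emph{non-degenerate} on $\mathfrak{B}_{\Lambda_\star}$ (as opposed to just well-defined) — this requires carefully chasing how the image of $H^1(\iota_\star)$ interacts with the annihilator of the cup product, and using that $H^1(G,\Lambda_\star\otimes\mathbb{Q})=0$ so that the "divisible" obstructions vanish. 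A cleaner alternative for step (3), which I would fall back on if the direct cup-product argument gets unwieldy, is to cite \cite[Definition 1.4.1]{MR3933907} or the surrounding results there directly, since $\mathfrak{B}_{\Lambda_\star}$ is defined following that reference and the square-order property is presumably established (or readily deducible) in that context; one then only needs the odd-$p$ refinement, which follows since any perfect-square-order statement trivially gives even $p$-valuation.
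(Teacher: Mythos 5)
Your fallback option is, in fact, the paper's entire proof: the paper simply quotes \cite[Proposition 2.2.2]{MR3933907}, which shows that $\mathfrak{B}_{\Lambda_\star}$ carries a perfect \emph{antisymmetric} $\mathbb{Q}/\mathbb{Z}$-valued bilinear pairing, and the evenness of $\textup{ord}_p\#\mathfrak{B}_{\Lambda_\star}$ for odd $p$ is then formal. So citing that result is a legitimate route, though as you state it (``the square-order property is presumably established'') it is a deferral rather than an argument, and you misidentify what needs to be imported: the essential input is the \emph{antisymmetry} of the pairing, not merely its existence and non-degeneracy.

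This matters because your primary argument fails at exactly this point. Step (4) as you state it is false: a finite abelian group with a non-degenerate \emph{symmetric} $\mathbb{Q}/\mathbb{Z}$-valued pairing need not have odd part of square order --- $\mathbb{Z}/p$ with $(x,y)\mapsto xy/p$ is a counterexample (discriminant groups of lattices are the standard source of such examples). Only the alternating case (equivalently, antisymmetric, since the relevant groups have odd order) forces square order. Your steps (2)--(3) treat ``symmetric or alternating'' as interchangeable (``in either case they agree up to a sign''), but the symmetry type of the induced pairing on $\mathfrak{B}_{\Lambda_\star}$ is precisely what the proposition hinges on. In the situation of this paper the lattice pairings $\left\langle\,,\,\right\rangle_\star$ are symmetric (and positive definite), so $\iota_\star^\vee=\iota_\star$, and the naive expectation from your construction would be a symmetric pairing on $\mathfrak{B}_{\Lambda_\star}$, which proves nothing. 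The non-obvious content of \cite[Proposition 2.2.2]{MR3933907} is that the pairing induced on this degree-one cohomological group is nevertheless antisymmetric (the sign change coming from the cup product in odd degree), and your sketch never establishes this. Your worry about non-degeneracy in step (3) is also legitimate, but it is secondary; without the antisymmetry the conclusion simply does not follow, so as written the direct route has a genuine gap and only the citation route survives.
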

 
 \begin{proof}
This is a formal consequence of \cite[Proposition 2.2.2]{MR3933907}, which shows that $\mathfrak{B}_{\Lambda_\star}$ admits a $\mathbb{Q}/\mathbb{Z}$-valued antisymmetric perfect bilinear pairing.
 \end{proof}

With $\phi:\Lambda_A\rightarrow \Lambda_B$ a $G$-equivariant isogeny as above, suppose we have a (necessarily $G$-equivariant) isogeny $\phi^t:\Lambda_B\rightarrow \Lambda_A$  such that $\phi$ and $\phi^t$ are adjoints for the pairings above. That is, such that 
\[\left \langle \phi(x),y\right \rangle_B=\left \langle x,\phi^t(y)\right \rangle_A\]
for all $x\in \Lambda_A$ and $y\in \Lambda_B$. A simple computation shows that 
\begin{equation} \label{eq:duals_isog_composition}
\phi^\vee \circ \iota_B\circ \phi=\iota_A\circ  \phi=\phi^t\phi.
\end{equation}
(We remark that this identity determines $\phi^t$ uniquely, since both $\phi$ and $\iota_A$ become invertible after tensoring by $\mathbb{Q}$.)

\begin{proposition} \label{main_size_prop}
For each prime $p$, we have
\[\textup{ord}_p\frac{\#\Phi_{\Lambda_A}^G  }{\#\Phi_{\Lambda_B}^G }\equiv \textup{ord}_p \textup{ det}\big(\phi^t \phi\mid \Lambda_A^G\big)+\textup{ord}_p\frac{\#\mathfrak{B}_{\Lambda_A} }{\# \mathfrak{B}_{\Lambda_B} }\pmod 2.\]
In particular, if $p$ is odd then (by Proposition \ref{order_of_betts_group}) we have 
\[ \textup{ord}_p\frac{\#\Phi_{\Lambda_A}^G  }{\#\Phi_{\Lambda_B}^G }\equiv \textup{ord}_p ~\textup{det}\big(\phi^t \phi\mid \Lambda_A^G\big)   \pmod 2.\]
\end{proposition}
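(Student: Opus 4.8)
The plan is to compute $\#\Phi_{\Lambda_A}^G/\#\Phi_{\Lambda_B}^G$ in terms of the quantities $z(H^0(\iota_\star))$, $\#H^1(G,\Lambda_\star)$ and $\#\mathfrak{B}_{\Lambda_\star}$ by taking $G$-cohomology of the sequences defining the discriminant groups, and then to substitute the relation \eqref{eq:duals_isog_composition} together with Lemma \ref{main prelim lattice lemma}, at which point everything collapses to $\det(\phi^t\phi\mid\Lambda_A^G)$ times a perfect square. Concretely, for $\star\in\{A,B\}$ apply the (long exact) $G$-cohomology sequence to $0\to\Lambda_\star\xrightarrow{\iota_\star}\Lambda_\star^\vee\to\Phi_{\Lambda_\star}\to 0$. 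Since $\iota_\star$ is injective with finite cokernel and $G$ is finite, $H^0(\iota_\star)\colon\Lambda_\star^G\to(\Lambda_\star^\vee)^G$ is again an isogeny of finite-rank free $\mathbb{Z}$-modules, so $\#\textup{coker}(H^0(\iota_\star))=z(H^0(\iota_\star))$; exactness then produces a short exact sequence
\[0\to\textup{coker}(H^0(\iota_\star))\to\Phi_{\Lambda_\star}^G\to\ker\!\big(H^1(G,\Lambda_\star)\xrightarrow{\iota_\star}H^1(G,\Lambda_\star^\vee)\big)\to 0,\]
and since the image of that last map is $\mathfrak{B}_{\Lambda_\star}$ by definition, this gives $\#\Phi_{\Lambda_\star}^G=z(H^0(\iota_\star))\cdot\#H^1(G,\Lambda_\star)/\#\mathfrak{B}_{\Lambda_\star}$. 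Dividing the cases $\star=A$ and $\star=B$ yields
\[\frac{\#\Phi_{\Lambda_A}^G}{\#\Phi_{\Lambda_B}^G}\cdot\frac{\#\mathfrak{B}_{\Lambda_A}}{\#\mathfrak{B}_{\Lambda_B}}=\frac{z(H^0(\iota_A))}{z(H^0(\iota_B))}\cdot\frac{\#H^1(G,\Lambda_A)}{\#H^1(G,\Lambda_B)}.\]

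Next I would apply the functor $H^0$, then the multiplicative function $z$, to the identity \eqref{eq:duals_isog_composition}. Every map occurring there is a $G$-equivariant isogeny of finite-rank free $\mathbb{Z}$-modules — each becomes an isomorphism after $\otimes\,\mathbb{Q}$, and $(-)^G$ commutes with $\otimes\,\mathbb{Q}$ as $G$ is finite — so this is legitimate and produces $z(H^0(\phi^\vee))\,z(H^0(\iota_B))\,z(H^0(\phi))=z(H^0(\iota_A))\,z(H^0(\phi^t\phi))$, hence
\[\frac{z(H^0(\iota_A))}{z(H^0(\iota_B))}=\frac{z(H^0(\phi^\vee))\,z(H^0(\phi))}{z(H^0(\phi^t\phi))}.\]
Now Lemma \ref{main prelim lattice lemma} gives $z(H^0(\phi^\vee))=z(H^0(\phi))\cdot\#H^1(G,\Lambda_A)/\#H^1(G,\Lambda_B)$, while $z(H^0(\phi^t\phi))=\big|\det(\phi^t\phi\mid\Lambda_A^G)\big|$ because $\phi^t\phi$ restricts to an injective endomorphism of the free module $\Lambda_A^G$. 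Feeding these back into the two displays above gives
\[\frac{\#\Phi_{\Lambda_A}^G}{\#\Phi_{\Lambda_B}^G}\cdot\frac{\#\mathfrak{B}_{\Lambda_A}}{\#\mathfrak{B}_{\Lambda_B}}=\frac{\big(z(H^0(\phi))\cdot\#H^1(G,\Lambda_A)/\#H^1(G,\Lambda_B)\big)^2}{\big|\det(\phi^t\phi\mid\Lambda_A^G)\big|}.\]

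To finish, take $\textup{ord}_p$ of both sides: the numerator on the right is the square of a positive rational and so contributes $0\pmod 2$, while $\textup{ord}_p\big|\det(\phi^t\phi\mid\Lambda_A^G)\big|=\textup{ord}_p\det(\phi^t\phi\mid\Lambda_A^G)$, and rearranging (signs being irrelevant modulo $2$) gives the first congruence of the Proposition. The ``in particular'' clause then follows at once from Proposition \ref{order_of_betts_group}, which forces $\textup{ord}_p\#\mathfrak{B}_{\Lambda_A}$ and $\textup{ord}_p\#\mathfrak{B}_{\Lambda_B}$ to be even when $p$ is odd, so the $\mathfrak{B}$-term drops out. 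The argument is essentially bookkeeping with the function $z$; the only points requiring a little care are reading off the correct short exact sequence in the first step from the long exact cohomology sequence and the definition of $\mathfrak{B}_{\Lambda_\star}$, and verifying that applying $(-)^G$ keeps each of $\phi,\phi^\vee,\iota_A,\iota_B,\phi^t\phi$ an isogeny so that $z$ is defined and multiplicative — neither constitutes a genuine obstacle.
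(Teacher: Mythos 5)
Your proof is correct and follows essentially the same route as the paper: the cohomology of $0\to\Lambda_\star\to\Lambda_\star^\vee\to\Phi_{\Lambda_\star}\to 0$ giving $\#\Phi_{\Lambda_\star}^G=z(H^0(\iota_\star))\cdot\#H^1(G,\Lambda_\star)/\#\mathfrak{B}_{\Lambda_\star}$, then \eqref{eq:duals_isog_composition}, Lemma \ref{main prelim lattice lemma}, Smith normal form, and Proposition \ref{order_of_betts_group}. The only difference is cosmetic bookkeeping in where Lemma \ref{main prelim lattice lemma} is substituted (you package the non-determinant terms as an explicit square, the paper as $z(H^0(\phi^\vee))^2$), which amounts to the same computation.
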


\begin{proof}
Taking cohomology of the short exact sequence of $G$-modules
\[0\longrightarrow \Lambda_A \stackrel{\iota_A}{\longrightarrow} \Lambda_A^\vee \longrightarrow \Phi_{\Lambda_A}\longrightarrow 0, \]
 we find
\begin{equation*}
\#\Phi_{\Lambda_A}^G =z\big(H^0(\iota_A)\big) \cdot \frac{\# H^1(G,\Lambda_A)}{\#\mathfrak{B}_{\Lambda_A}}.
\end{equation*}
Combining this, the corresponding result for $A$ replaced by $B$, and  Lemma \ref{main prelim lattice lemma}, gives
\begin{equation} \label{tamagawa invariants lemma}
\frac{\#\Phi_{\Lambda_A}^G  }{\# \Phi_{\Lambda_B}^G }=\frac{z\big(H^0(\iota_A)\big)}{z\big(H^0(\iota_B)\big)} \cdot \frac{z(H^0(\phi^\vee)) }{\# z(H^0(\phi))}\cdot \frac{\#\mathfrak{B}_{\Lambda_B}}{\#\mathfrak{B}_{\Lambda_A} }.
\end{equation}
It follows from (restricting to $G$-invariants in) \eqref{eq:duals_isog_composition} that we have 
\[\frac{z(H^0(\iota_A))}{z(H^0(\iota_B))}=\frac{z(H^0(\phi^\vee))\cdot z(H^0(\phi))}{ z(H^0(\phi^t\phi))}.\]
Substituting into \eqref{tamagawa invariants lemma} we see that, for each prime $p$, we have 
\begin{eqnarray*}
\textup{ord}_p\frac{\#\Phi_{\Lambda_A}^G  }{\# \Phi_{\Lambda_B}^G }  \equiv \textup{ord}_pz(H^0(\phi^t\phi))+\textup{ord}_p \frac{\#\mathfrak{B}_{\Lambda_A}}{\#\mathfrak{B}_{\Lambda_B} }\pmod 2.
\end{eqnarray*}
 Now $H^0(\phi^t\phi)$ is a self-isogeny of the finite-rank free $\mathbb{Z}$-module $\Lambda_A^G$. By properties of Smith normal form, its cokernel has size $|\det (\phi^t\phi\mid \Lambda_A^G)|$.
\end{proof}

\section{Proof of the local formula} \label{sec_4}
 
In this section we prove Theorem \ref{main_thm_local}. Specifically, the result follows from Corollary \ref{cor:arch_local_thm} (archimedean local fields), Proposition \ref{prop:res_not_p_arch_local_formula} (residue characteristic different from $p$) and the combination of Corollary \ref{cor:uniform_residue_char} and Proposition \ref{main_eq_res_prop} (residue characteristic $p$).

For the rest of this section, let $A$ and $B$ be abelian varieties defined over a local field $F$ of characteristic $0$, and let $\phi:A\rightarrow B$ be an isogeny. Suppose that both $A$ and $B$ are principally polarised, and denote by $\phi^t:B\rightarrow A$ the corresponding dual isogeny. Write $f=\phi^t\phi \in \textup{End}(A)$, noting that $f=f^\dagger$, where $\dagger$ denotes the Rosati involution on $\textup{End}(A)$. For each odd prime $p$ we associate to $\phi$ the character  $\chi_{A[\phi],p}:G_F\rightarrow \mathbb{F}_p^{\times}$, as in Definition \ref{def:the_galois_character}. 

\subsection{Archimedean local fields} \label{ssec:arch_places}

Suppose that $F$ is archimedean.

\begin{proposition} \label{main_arch_prop}
Let $p$ be an odd prime. Then we have 
\[(-1)^{\textup{ord}_p\frac{\#B(F)/\phi A(F)}{\#A(F)[\phi]}}=\big(-1,\chi_{A[\phi],p}\big)_F\cdot(-1)^{\textup{ord}_p\textup{deg}(\phi)}.\]
\end{proposition}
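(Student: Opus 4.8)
The plan is to treat the two archimedean cases, $F \cong \mathbb{C}$ and $F \cong \mathbb{R}$, separately, since the group $A(F)$ is a compact real Lie group whose structure is very explicit. In both cases, write $g = \dim A$, so that $A(F)$ as a topological group is $(\mathbb{R}/\mathbb{Z})^g$ when $F = \mathbb{C}$ and $(\mathbb{R}/\mathbb{Z})^g \times (\mathbb{Z}/2)^{c}$ for some $0 \le c \le g$ when $F = \mathbb{R}$, with the $(\mathbb{Z}/2)^c$ recording the component group $\pi_0(A(\mathbb{R}))$. The key point is that the isogeny $\phi$ induces a map on Lie algebras $\operatorname{Lie}(\phi) : \operatorname{Lie}(A/F) \to \operatorname{Lie}(B/F)$, whose determinant (with respect to lattices coming from $A(F)$, $B(F)$) is $\pm\deg(\phi)$ up to the contribution of the component groups, and this is dual to the statement about $\Omega^1$. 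The archimedean case of the main local theorem then follows by combining \textbf{Proposition \ref{main_arch_prop}} with the identity $\det(f \mid \Omega^1(A/F)) = \pm \deg(\phi)$ noted in the statement of Theorem \ref{thm:main_local_formula}.

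For the quotient $\#B(F)/\phi A(F)$ over $\#A(F)[\phi]$, I would first reduce to the connected components: the snake lemma applied to $0 \to A(F)^\circ \to A(F) \to \pi_0(A(F)) \to 0$ and the analogous sequence for $B$ shows that $z$ (in the notation of Section \ref{sec_3}) of $\phi$ on $A(F) \to B(F)$ factors as the product of $z$ of $\phi$ on the connected components and $z$ of the induced map on component groups. On the connected components, $\phi : A(F)^\circ \to B(F)^\circ$ is, after identifying $A(F)^\circ = \operatorname{Lie}(A/F)/\Lambda_A$ and similarly for $B$, a surjection with kernel of order $|\det \operatorname{Lie}(\phi)| \cdot [\Lambda_B : \operatorname{Lie}(\phi)\Lambda_A]^{-1}$-type expression; more simply, since $A(F)^\circ$ and $B(F)^\circ$ are divisible of the same dimension, $z(\phi \mid \text{connected components}) = 1$ when $F = \mathbb{C}$ (the map is surjective with kernel $A(F)^\circ[\phi]$, and comparing with the full torsion $A[\phi](\mathbb{C})$ which has order $\deg\phi$), while when $F = \mathbb{R}$ one gets a factor recording the $2$-torsion. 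The cleanest route is: over $\mathbb{C}$, $A(F)^\circ = A(F)$ is divisible so $\#B(F)/\phi A(F) = 1$ and $\#A(F)[\phi] = \#A[\phi](\mathbb{C})[\text{its }\mathbb{R}\text{-points}]$... actually $A(\mathbb{C})[\phi]$ has order $\deg\phi$, so the exponent is $-\operatorname{ord}_p \deg\phi \equiv \operatorname{ord}_p\deg\phi \pmod 2$; and $\chi_{A[\phi],p}$ restricted to the trivial group $G_\mathbb{C}$ is trivial so $(-1,\chi_{A[\phi],p})_\mathbb{C} = 1$. This gives the $\mathbb{C}$ case immediately.

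The real case is the main obstacle. Here $G_F = \operatorname{Gal}(\mathbb{C}/\mathbb{R}) = \langle c \rangle$ has order $2$, and $\chi_{A[\phi],p}(c) \in \mathbb{F}_p^\times$ is, by \textbf{Lemma \ref{order_2_element_lemma}} applied with $M = A[\phi][p^\infty]$ and $\sigma = c$, equal to $(-1)^{\operatorname{ord}_p(\#M/\#M^c)}$; and $(-1, \chi_{A[\phi],p})_\mathbb{R}$ is $\chi_{A[\phi],p}(c)$ since $-1 < 0$ corresponds to complex conjugation under the reciprocity map for $\mathbb{R}$ — so the left-hand side of Proposition \ref{main_arch_prop} becomes $(-1)$ raised to $\operatorname{ord}_p\frac{\#B(\mathbb{R})/\phi A(\mathbb{R})}{\#A(\mathbb{R})[\phi]} + \operatorname{ord}_p\frac{\#A[\phi]}{\#A[\phi]^c}$ (working $p$-adically throughout; here I use that $\#A[\phi] = \deg\phi$). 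I would then need to show this exponent is $\equiv \operatorname{ord}_p\deg\phi \pmod 2$, equivalently that $\operatorname{ord}_p$ of the rational number $\frac{\#B(\mathbb{R})/\phi A(\mathbb{R})}{\#A(\mathbb{R})[\phi]} \cdot \frac{1}{\#A[\phi]^c}$ is even. The strategy is a global-to-local cohomology computation: the Poitou--Tate / Tate local duality pairing, or more elementarily the Euler characteristic formula for $G_\mathbb{R} = \mathbb{Z}/2$ acting on the finite module $A[\phi](\mathbb{C})$ together with the interpretation of $A(\mathbb{R})/\phi B(\mathbb{R})$ and $B(\mathbb{R})[\phi]$ via $H^0$ and $H^1$ of $G_\mathbb{R}$, reduces everything to showing that a certain Tate cohomology Herbrand-type quotient of $A[\phi](\mathbb{C})$ has $p$-valuation controlled by $\deg\phi$. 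Concretely, one has the exact sequence $0 \to A[\phi](\mathbb{C}) \to A(\mathbb{C}) \xrightarrow{\phi} B(\mathbb{C}) \to 0$; taking $G_\mathbb{R}$-cohomology and using $H^1(G_\mathbb{R}, A(\mathbb{C})) = A(\mathbb{R})/A(\mathbb{R})^\circ = \pi_0(A(\mathbb{R}))$ (a fact from the theory of real abelian varieties, or Milne's arithmetic duality) and $H^2(G_\mathbb{R}, A(\mathbb{C})) = 0$, I would extract $z(\phi \mid A(\mathbb{R}))$ in terms of $\#A[\phi]^c$, $\#H^1(G_\mathbb{R}, A[\phi](\mathbb{C}))$, and $\#\pi_0$ groups, and then observe that the pieces not involving $\deg\phi$ organise into the $p$-part of a perfect alternating pairing (Poincaré duality on real points, or the self-duality of $A$ via the principal polarisation $\phi^t$), hence have even $p$-valuation. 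Throughout I would keep the principal polarisation in play, since it is what makes $f = \phi^t\phi$ self-adjoint and forces the `error terms' to be squares up to $p$-adic units; this parallels the role of Proposition \ref{main_size_prop} in the nonarchimedean case and of the group $\mathfrak{B}_{\Lambda_\star}$.
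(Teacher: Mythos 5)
Your $\mathbb{C}$ case is fine, and your reduction of the $\mathbb{R}$ case is exactly the right start (and matches the paper): via Lemma \ref{order_2_element_lemma} and the fact that $(-1,\chi_{A[\phi],p})_{\mathbb{R}}=\chi_{A[\phi],p}(c)$, everything comes down to showing that
\[
\textup{ord}_p\,\frac{\#B(\mathbb{R})/\phi A(\mathbb{R})}{\#A(\mathbb{R})[\phi]\cdot\#A[\phi]^{c}}
\]
is even. But here is a genuine gap: you never prove this, and the mechanism you propose for it --- extracting $z(\phi)$ from the long exact sequence and then arguing that the leftover terms ``organise into the $p$-part of a perfect alternating pairing'' coming from the principal polarisation, in parallel with Proposition \ref{main_size_prop} --- is both unsubstantiated and not the right mechanism. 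The polarisation plays no role in Proposition \ref{main_arch_prop} at all. Note first that the two factors in the denominator are literally the same group, $A(\mathbb{R})[\phi]=A[\phi](\mathbb{C})^{c}=A[\phi]^{c}$, so they cancel modulo squares; the whole issue is the single assertion that $\textup{ord}_p\#\big(B(\mathbb{R})/\phi A(\mathbb{R})\big)$ is even.

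That assertion holds for an elementary reason already implicit in your own setup, but never stated: $B(\mathbb{R})/\phi A(\mathbb{R})$ is killed by $2$. Indeed, in your cohomology sequence for $0\to A[\phi]\to A(\mathbb{C})\to B(\mathbb{C})\to 0$, the quotient $B(\mathbb{R})/\phi A(\mathbb{R})$ injects into $H^1(G_{\mathbb{R}},A[\phi])$, which is annihilated by $\#G_{\mathbb{R}}=2$; since $p$ is odd, its $p$-valuation is $0$, not merely even, and no duality, Euler characteristic or pairing input is needed. (The paper argues equivalently with the Lie group structure: $A^0(\mathbb{R})$ is divisible, so $\phi A^0(\mathbb{R})=B^0(\mathbb{R})$, and $B(\mathbb{R})/\phi A(\mathbb{R})$ is a quotient of the elementary abelian $2$-group $\pi_0(B(\mathbb{R}))$.) Likewise, the discussion in your first paragraph about $\det\operatorname{Lie}(\phi)$ versus lattice indices is irrelevant to this proposition: the identity $\det(f\mid\Omega^1(A/F))=\pm\deg(\phi)$ is only needed afterwards, in Corollary \ref{cor:arch_local_thm}. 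With the $2$-torsion observation inserted in place of the pairing argument, your proof closes and coincides with the paper's.
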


\begin{proof}
The case $F=\mathbb{C}$ is trivial, so assume $F=\mathbb{R}$. The group of $\mathbb{R}$-valued points $A(\mathbb{R})$ can naturally be viewed as a real Lie group. Let $A^0(\mathbb{R})$ denote its connected component of the identity. Then $A^0(\mathbb{R})$ is a divisible abelian group, while $A(\mathbb{R})/A^0(\mathbb{R})$ is an elementary abelian $2$-group. Along with the corresponding statements for $B$ and the fact that $p$ is odd, this implies that 
\[\textup{ord}_p\#B(\mathbb{R})/\phi A(\mathbb{R})=\textup{ord}_p \# B^0(\mathbb{R})/\phi A^0(\mathbb{R})=0.\]
Thus
\[(-1)^{\textup{ord}_p\frac{\#B(F)/\phi A(F)}{\#A(F)[\phi]}}\cdot (-1)^{\textup{ord}_p\textup{deg}(\phi)}=(-1)^{\textup{ord}_p\frac{\#A[\phi]}{\#A[\phi]^\sigma}},\]
where $\sigma$ denotes complex conjugation. The result now follows from Lemma \ref{order_2_element_lemma}.
\end{proof}

\begin{cor} \label{cor:arch_local_thm}
 Theorem \ref{main_thm_local} holds for $F$ archimedean. 
\end{cor}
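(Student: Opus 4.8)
The plan is to deduce Corollary~\ref{cor:arch_local_thm} from Proposition~\ref{main_arch_prop} together with the identity $\det(f\mid\Omega^1(A/F))=\pm\deg(\phi)\in\mathbb{Z}$, which is asserted (for $v\mid\infty$) in the statement of Theorem~\ref{thm:main_local_formula} and which I would establish here as well. Granting that identity, the right-hand side of the formula in Theorem~\ref{main_thm_local} equals $(-1)^{\textup{ord}_p(\pm\deg(\phi))}=(-1)^{\textup{ord}_p\deg(\phi)}$ since $p$ is odd, while substituting Proposition~\ref{main_arch_prop} into the left-hand side (the two occurrences of $(-1,\chi_{A[\phi],p})_F$ cancelling) shows that the left-hand side also equals $(-1)^{\textup{ord}_p\deg(\phi)}$. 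So the whole statement reduces to proving $\det(f\mid\Omega^1(A/F))=\pm\deg(\phi)$.

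For that, I would first reduce to $F=\mathbb{C}$: when $F=\mathbb{R}$, extending scalars to $\mathbb{C}$ leaves $\deg(\phi)$ unchanged and, since $\Omega^1(A/\mathbb{R})\otimes_{\mathbb{R}}\mathbb{C}=\Omega^1(A_{\mathbb{C}}/\mathbb{C})$ compatibly with the action induced by $f$, leaves $\det(f\mid\Omega^1(A/F))$ unchanged. Over $\mathbb{C}$, pass to the analytic uniformisations $A(\mathbb{C})=\textup{Lie}(A)/\Lambda_A$ and $B(\mathbb{C})=\textup{Lie}(B)/\Lambda_B$. Then $\phi$, $\phi^t$ and $f=\phi^t\phi$ are induced by $\mathbb{C}$-linear maps on Lie algebras, with $\textup{Lie}(\phi)$ an isomorphism carrying $\Lambda_A$ into $\Lambda_B$ of index $\deg(\phi)$; since the polarisations are principal, $\deg(\phi^t)=\deg(\phi)$, so $\textup{Lie}(f)$ carries $\Lambda_A$ into itself of index $\deg(f)=\deg(\phi)^2$. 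Identifying $\Omega^1(A/\mathbb{C})$ with the cotangent space $\textup{Lie}(A)^{\vee}$, on which $f$ acts by the transpose of $\textup{Lie}(f)$, we get $\det(f\mid\Omega^1(A/\mathbb{C}))=\det_{\mathbb{C}}(\textup{Lie}(f))$, and the index computation gives $\deg(\phi)^2=\deg(f)=|\det_{\mathbb{R}}(\textup{Lie}(f)\mid\Lambda_A\otimes\mathbb{R})|=|\det_{\mathbb{C}}(\textup{Lie}(f))|^2$, so $|\det(f\mid\Omega^1(A/\mathbb{C}))|=\deg(\phi)$. Finally, $f=f^{\dagger}$ under the Rosati involution, so $\textup{Lie}(f)$ is self-adjoint with respect to the (positive-definite) Riemann form attached to the polarisation of $A$, hence has real eigenvalues and therefore real determinant; combined with $|\det(f\mid\Omega^1(A/\mathbb{C}))|=\deg(\phi)$ this yields $\det(f\mid\Omega^1(A/\mathbb{C}))=\pm\deg(\phi)\in\mathbb{Z}$ (equality with $+\deg(\phi)$ if one moreover uses that $f=\phi^t\phi$ is Rosati-positive, being $\textup{Lie}(\phi)$ composed with its adjoint).

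The only step carrying real content is the last one, namely that $\det_{\mathbb{C}}(\textup{Lie}(f))$ is real, equivalently rational. For an arbitrary endomorphism $g$ of $A$ one has merely $|\det_{\mathbb{C}}(\textup{Lie}(g))|^2=\deg(g)$, so this determinant is only a complex square root of the degree and need not be rational; it is precisely the hypothesis $f=f^{\dagger}$ --- automatic here since $f=\phi^t\phi$ --- that makes $\textup{Lie}(f)$ self-adjoint for the Riemann form and hence its determinant real, after which the fact that $\deg(f)=\deg(\phi)^2$ is a perfect square pins down the value up to sign. Everything else is routine bookkeeping with the analytic uniformisation; note in particular that for $F=\mathbb{R}$ the reality of $\det(f\mid\Omega^1(A/\mathbb{R}))$ holds for free, so there one needs only $|\det(f\mid\Omega^1(A/\mathbb{R}))|=\deg(\phi)$, which falls out of the lattice-index computation after base change to $\mathbb{C}$.
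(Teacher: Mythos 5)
Your proposal is correct and follows essentially the same route as the paper: reduce via Proposition~\ref{main_arch_prop} to showing $\det(f\mid\Omega^1(A/F))=\pm\deg(\phi)$, pass to $F=\mathbb{C}$ and the analytic uniformisation, use $\deg(f)=\deg(\phi)^2$ together with $\deg(f)=|\det_{\mathbb{C}}(f\mid\textup{Lie}\,A)|^2$, and invoke $f=f^{\dagger}$ to see the determinant is real via self-adjointness for the Hermitian form of the principal polarisation. The only differences are cosmetic (you phrase the norm identity as a lattice-index computation where the paper cites Cassels, and you add the unneeded refinement that the sign is $+$ by Rosati positivity).
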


\begin{proof}
Given Proposition \ref{main_arch_prop}, it suffices to show that 
\begin{equation} \label{degree_as_det}
\det\big(f\mid \Omega^1(A/F)\big)=\pm\deg(\phi),
\end{equation}
for which we can assume that $F=\mathbb{C}$. As an abelian variety over $\mathbb{C}$, $A$ has a uniformisation as a complex torus 
$A(\mathbb{C})=V/\Lambda,$
where $V=\Omega^1(A/F)^\vee$ is a complex vector space of dimension $g$, and $\Lambda=H_1(A,\mathbb{Z})$ is a lattice of rank $2g$. The polarisation on $A$ corresponds to a positive definite Hermitian form $H:V\times V\rightarrow \mathbb{C}$. Since $f=f^\dagger$, the endomorphism $f$ is self-adjoint for $H$ (see \cite{MR861973}, Proposition in \S4). In particular, the determinant of $f$ on $V$ is real. 

We now compute (cf. \cite{MR861973}, Lemma in \S2, for the first equality)  
\[\det(f\mid \Lambda)=\deg(f)=\deg(\phi^t\phi)=\deg(\phi)^2.\]
Now $\Lambda\otimes_{\mathbb{Z}}\mathbb{R}= V$, so by \cite[Theorem A.1]{MR0222054} we have
\[\det(f\mid \Lambda)=N_{\mathbb{C}/\mathbb{R}}\big(\det(f\mid V) \big)= \det(f\mid V) ^2.\]
 Thus $\det(f\mid V)=\pm \textup{deg}(\phi)$ as desired.
\end{proof}

\subsection{Nonarchimedean local fields}\label{Sec_lattice}  

Suppose now that $F$ is nonarchimedean, with residue field $k$ and ring of integers $\mathcal{O}_F$. Denote by $v_F:F^\times \twoheadrightarrow \mathbb{Z}$ the normalised valuation on $F$, and denote by $\mid \cdot \mid_F$ the normalised absolute value.

 Let $\mathcal{A}/\mathcal{O}_F$ (resp. $\mathcal{B}/\mathcal{O}_F$) denote the N\'{e}ron model of $A$ (resp. $B$). Denote by $c(A/F)$ (resp. $c(B/F)$) the Tamagawa number of $A$ (resp. $B$). Write $g=\dim A$, and let $\mathcal{F}_A$, $\mathcal{F}_B$ be $g$-dimensional formal group laws over $\mathcal{O}_F$ associated to $\mathcal{A}, \mathcal{B}$ respectively.  The isogeny $\phi:A\rightarrow B$ induces an isogeny $\widehat{\phi}\in \textup{Hom}_{\mathcal{O}_F}(\mathcal{F}_A,\mathcal{F}_B)$.  Denote by $D(\widehat\phi)$ the Jacobian of $\widehat{\phi}$, evaluated at $0$. That is, $\widehat{\phi}$ is a $g$-tuple of power series in variables $\textbf{x}=(x_1,...,x_g)$, with coefficients in $\mathcal{O}_F$, and $D(\widehat{\phi})$ is the $g\times g$ matrix with coefficients in $\mathcal{O}_F$ such that $\phi(\textbf{x})\equiv D(\widehat\phi)\textbf{x} ~\textup{ (mod deg }2).$  The absolute value of the determinant of $D(\widehat \phi)$ is independent of all choices.

\begin{lemma} \label{size_of_coker_tamagawa}
We have  
 \[  \frac{\#B(F)/\phi A(F)}{\#A(F)[\phi]}= \frac{c(B/F)}{c(A/F)}\cdot \big|\textup{det}D(\widehat{\phi})\big|^{-1}_F.\]
\end{lemma}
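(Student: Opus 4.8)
The plan is to compute both sides using the standard exact sequence relating the points of an abelian variety over a nonarchimedean local field to the points of its Néron model and formal group. Specifically, for $\star \in \{A, B\}$, there is a filtration $\mathcal{F}_\star(\mathfrak{m}_F) \subseteq \star^0(F) \subseteq \star(F)$, where $\star^0(F)$ denotes the points mapping to the identity component of the special fibre of the Néron model, with $\star(F)/\star^0(F)$ of order the Tamagawa number $c(\star/F)$, and $\star^0(F)/\mathcal{F}_\star(\mathfrak{m}_F) \cong \overline{\star}^0(k)$, the $k$-points of the identity component of the special fibre. The idea is to apply the function $z(\cdot)$ (which is multiplicative in short exact sequences and in compositions, and which for an isogeny of finite groups returns the ratio of cardinalities) to the commutative diagram formed by these filtrations for $A$ and for $B$, with vertical maps induced by $\phi$.

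First I would observe that the quantity $\#B(F)/\phi A(F) \big/ \#A(F)[\phi]$ is precisely $z(\phi : A(F) \to B(F))$ in the notation of Section~3, since $\phi$ has finite kernel and cokernel on $F$-points. By multiplicativity of $z$ across the three-step filtration, $z(\phi \mid A(F)) = z(\phi \mid \mathcal{F}_A(\mathfrak{m}_F)) \cdot z(\phi \mid \overline{A}^0(k)) \cdot z(\phi \mid \Phi_A(k))$, where $\Phi_\star$ is the component group; the last factor contributes $c(B/F)/c(A/F)$ up to the subtlety that one only sees the $k$-rational part, but in the semistable or good reduction setting handled here this works out (and in any case this is the standard Tamagawa-number bookkeeping). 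The middle factor: on the identity component of the special fibre, $\phi$ induces an isogeny of the corresponding smooth connected group schemes over $k$; for an isogeny of connected algebraic groups over a finite field, the numbers of $k$-points of source and target are equal (the Lang–Steinberg argument / the fact that $\#G(k)$ depends only on the Frobenius eigenvalues on $H^*$, which match under an isogeny), so $z(\phi \mid \overline{A}^0(k)) = 1$. The final factor: on the formal groups, $\widehat\phi : \mathcal{F}_A(\mathfrak{m}_F) \to \mathcal{F}_B(\mathfrak{m}_F)$ has, after passing to a sufficiently deep subgroup where both formal groups are isomorphic to $(\mathfrak{m}_F^N)^g$ as $\mathbb{Z}_p$-modules and $\widehat\phi$ is "linearised" by its Jacobian $D(\widehat\phi)$, cokernel-to-kernel ratio equal to $|\det D(\widehat\phi)|_F^{-1}$ — this is the formal-group analogue of the computation of the index of a linear map, using that $z$ is unchanged on passing to finite-index subgroups.

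The main obstacle, and the step requiring the most care, is the formal group computation: justifying that $z(\widehat\phi : \mathcal{F}_A(\mathfrak{m}_F) \to \mathcal{F}_B(\mathfrak{m}_F)) = |\det D(\widehat\phi)|_F^{-1}$. The clean way is to choose $N$ large enough that the logarithm maps $\mathcal{F}_A(\mathfrak{m}_F^N) \xrightarrow{\sim} \mathfrak{m}_F^{Ng} \subset F^g$ and similarly for $B$ are isomorphisms of $\mathbb{Z}_p$-modules, under which $\widehat\phi$ becomes the linear map $D(\widehat\phi) : F^g \to F^g$ (since $\log_B \circ \widehat\phi = D(\widehat\phi) \circ \log_A$, as both sides are homomorphisms with the same derivative at $0$ and the target is torsion-free, or by directly comparing linear terms); then $z$ of a linear automorphism of a lattice $\Lambda \subset F^g$ given by an invertible matrix $M$ is $|\det M|_F^{-1}$ by the Smith normal form / product formula, exactly as in Lemma~\ref{basic properties of z}. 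Finiteness of $\mathcal{F}_A(\mathfrak{m}_F)/\mathcal{F}_A(\mathfrak{m}_F^N)$ (it is a finite $p$-group) lets one pass between the deep subgroup and the whole formal group without changing $z$. Assembling the three factors via multiplicativity of $z$ and equating with $z(\phi \mid A(F))$ on the other side yields the claimed identity; I would present the proof as this chain of equalities, citing \cite{MR2261462} or the standard references for the filtration and for the formal-group facts, and flagging the connected-algebraic-group point-count equality as the one genuinely used input beyond bookkeeping.
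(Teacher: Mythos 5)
Your argument is essentially a reconstruction of the standard proof of this statement, which the paper itself does not reprove: its proof is the single line ``This is \cite[Lemma 3.8]{MR1370197}'', and Schaefer's proof of that lemma proceeds exactly along your lines (the filtration of $A(F)$ by the formal group and the points reducing into the identity component, multiplicativity of $z$ via the snake lemma, Lang's theorem for the special fibre, and the logarithm to linearise $\widehat{\phi}$). So the route is the right one and the identification of the three factors with $|\det D(\widehat{\phi})|_F^{-1}$, $1$, and $c(B/F)/c(A/F)$ is correct; there is in fact no ``subtlety'' in the component-group factor, since $A(F)/A^0(F)\cong\Phi_A(\bar k)^{\mathrm{Fr}}$ (surjectivity of reduction uses smoothness plus Lang) and $z$ of any map of finite groups is just the ratio of their orders.

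Two steps are stated more loosely than they should be. First, your treatment of the middle factor assumes that $\phi$ induces an isogeny of the identity components of the special fibres of the N\'eron models. Under the semistability (or good reduction) hypotheses in force wherever the paper applies the lemma this is fine: the fibres are semiabelian, and choosing $\psi$ with $\psi\phi=[n]$ one sees that the reduction of $\phi$ has finite kernel, hence is surjective by dimension count, so Lang's theorem applies as you say. But for the lemma as stated (no reduction hypothesis) this can fail: multiplication by $p$ on an elliptic curve with additive reduction in residue characteristic $p$ induces the zero map on the $\mathbb{G}_a$-fibre. What is actually needed is only $\#\overline{\mathcal{A}}^0(k)=\#\overline{\mathcal{B}}^0(k)$, which holds in general because the toric and abelian parts of the two fibres are isogenous and a connected unipotent group of dimension $u$ over $k$ has exactly $(\#k)^u$ points; either this argument or an explicit restriction to the semistable case should be supplied. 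Second, ``$z$ is unchanged on passing to finite-index subgroups'' is false as a general principle (take the identity map and subgroups of different index). What saves you is that passing from $\mathcal{F}_A(\mathfrak{m}_F)\to\mathcal{F}_B(\mathfrak{m}_F)$ to the deep subgroups $\mathcal{F}_A(\mathfrak{m}_F^N)\to\mathcal{F}_B(\mathfrak{m}_F^N)$ changes $z$ by the $z$ of the induced map on the finite quotients, and these quotients have the same order $(\#k)^{(N-1)g}$, being filtered with graded pieces $\mathfrak{m}_F^i/\mathfrak{m}_F^{i+1}\cong k^g$; say this, rather than invoking finiteness alone. With these two repairs your proof is complete, and it buys a self-contained argument where the paper simply outsources to \cite{MR1370197}.
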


\begin{proof}
This is  \cite[Lemma 3.8]{MR1370197}.
\end{proof}

For the rest of this section we impose the following assumption.

\begin{assumption}
Assume that $A$, hence also the isogeneous abelian variety $B$, has semistable reduction over $F$.
\end{assumption}

\subsubsection{Component groups and Grothendieck's monodromy pairing}
The results we recall now can be found in \cite{SGA7IX}. We refer to the survey paper of Papikian \cite{MR3204270}, and the precise references therein, for more details.  

Denote by $\textup{Fr}$ the Frobenius element in $\textup{Gal}(\bar{k}/k)$. Let $\Phi_A$ (resp. $\Phi_B$) denote the group (scheme) of connected components of the special fibre of the N\'{e}ron model of $A$ (resp. $B$). The group $\Phi_A(\bar{k})$ is finite abelian and carries a natural continuous $\textup{Gal}(\bar{k}/k)$-action. By definition, with have 
\begin{equation*} \label{Tam_num_defi_eq}
c(A/F)=\#\Phi_A(\bar{k})^{\textup{Fr}}\quad \textup{ and }\quad c(B/F)=\#\Phi_B(\bar{k})^{\textup{Fr}}.
\end{equation*}

Let $T_A$ denote the toric part of the special fibre of  $\mathcal{A}$ (i.e. the maximal subtorus of $\mathcal{A}_k^0$). Let $\mathfrak{X}_A$ denote the character group of $T_A$. It is a finite-rank free $\mathbb{Z}$-module with a continuous $\textup{Gal}(\bar{k}/k)$-action. 
Denote by $A^\vee$ the dual abelian variety. Grothendieck's monodromy pairing gives a bilinear $\textup{Gal}(\bar{k}/k)$-equivariant pairing 
$\mathfrak{X}_A\times \mathfrak{X}_{A^\vee}\rightarrow \mathbb{Z}.$

The given principal polarisation $\lambda:A\rightarrow A^\vee$ induces, by functoriality of N\'{e}ron models, a $\textup{Gal}(\bar{k}/k)$-equivariant isomorphism $ \mathfrak{X}_{A^\vee}\cong\mathfrak{X}_A$, which we denote $\bar{\lambda}$. From this we obtain a pairing 
\begin{equation*} \label{mon_pairing_A}
\left \langle ~,~\right \rangle_A: \mathfrak{X}_A\otimes\mathfrak{X}_A\stackrel{1\otimes \overline{\lambda}^{-1}}{\longrightarrow}\mathfrak{X}_A\otimes \mathfrak{X}_{A^\vee}\longrightarrow \mathbb{Z},
\end{equation*}
where the final map is the monodromy pairing. This pairing is positive definite and symmetric. We define $\left \langle~,~\right \rangle_B$ similarly. 

Write $\mathfrak{X}_A^\vee=\textup{Hom}(\mathfrak{X}_A,\mathbb{Z})$. We have an inclusion 
$\iota_A:\mathfrak{X}_A\rightarrow \mathfrak{X}_A^\vee,$
 sending $x\in \mathfrak{X}_A$ to $\left \langle -,x\right \rangle_A$.  There is then a short exact sequence of $\textup{Gal}(\bar{k}/k)$-modules 
\begin{equation} \label{lattice_cmponent_gp_seq}
0\longrightarrow \mathfrak{X}_A \stackrel{\iota_A}{\longrightarrow}\mathfrak{X}_A^\vee \longrightarrow \Phi_A(\bar{k})\longrightarrow 0.
\end{equation}

The isogeny $\phi:A\rightarrow B$ induces a $\textup{Gal}(\bar{k}/k)$-equivariant homomorphism $\mathfrak{X}_B\rightarrow \mathfrak{X}_A$, which is injective with finite cokernel. Similarly, the dual isogeny $\phi^t:B\rightarrow A$ induces a homomorphism $\mathfrak{X}_A\rightarrow \mathfrak{X}_B$. These maps are adjoints for the pairings $\left \langle~,~\right \rangle_A$ and $\left \langle ~,~\right \rangle_B$. Recall that $f=\phi^t\phi$. This acts as a  $\textup{Gal}(\bar{k}/k)$-equivariant endomorphism of $\mathfrak{X}_A$. 

\begin{proposition} \label{Tam_comp_prop}
Let $p$ be an odd prime. Then we have 
\[\textup{ord}_p~\frac{c(A/F)}{c(B/F)}\equiv \textup{ord}_p \textup{ det}\big(f \mid \mathfrak{X}_A^{\textup{Fr}}\big) \pmod 2.\] 
\end{proposition}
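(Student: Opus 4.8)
The plan is to reduce the claim to Proposition \ref{main_size_prop}, applied to the lattices $\Lambda_A = \mathfrak{X}_A$ and $\Lambda_B = \mathfrak{X}_B$ equipped with the monodromy pairings $\langle\,,\,\rangle_A$, $\langle\,,\,\rangle_B$, with $G$ taken to be the procyclic group generated by $\textup{Fr}$ (or a suitable finite cyclic quotient through which the action on $\mathfrak{X}_A$ and $\mathfrak{X}_B$ factors). First I would check that the hypotheses of that proposition are met: the monodromy pairings are $\textup{Gal}(\bar k/k)$-equivariant, non-degenerate (indeed positive definite) and symmetric; $\phi$ induces a $\textup{Gal}(\bar k/k)$-equivariant isogeny $\mathfrak{X}_B \to \mathfrak{X}_A$ and $\phi^t$ induces $\mathfrak{X}_A \to \mathfrak{X}_B$; and these are adjoint for the two pairings, as recorded in the text just before Proposition \ref{Tam_comp_prop}. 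Note the direction of the arrows: here the natural isogeny runs $\mathfrak{X}_B \to \mathfrak{X}_A$, so in the notation of Section \ref{lattice_results} one sets $\Lambda_A := \mathfrak{X}_B$, $\Lambda_B := \mathfrak{X}_A$, with $\phi$ there being our $\mathfrak{X}_B \to \mathfrak{X}_A$ and $\phi^t$ there being our $\mathfrak{X}_A \to \mathfrak{X}_B$; the composite $\phi^t\phi$ in the sense of Section \ref{lattice_results} is then our $f = \phi^t\phi$ acting on $\mathfrak{X}_A$ — wait, one must be careful, since with that relabelling $\Lambda_A^G = \mathfrak{X}_B^{\textup{Fr}}$. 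To get $\det(f \mid \mathfrak{X}_A^{\textup{Fr}})$ one should instead take $\Lambda_A := \mathfrak{X}_A$ with the endomorphism being $\phi^t\phi$ viewed as a self-isogeny of $\mathfrak{X}_A$ (which it is, as noted in the text), i.e. factor $f$ as $\mathfrak{X}_A \xrightarrow{\phi^t\text{-induced}} \mathfrak{X}_B \xrightarrow{\phi\text{-induced}} \mathfrak{X}_A$; this is exactly the shape of $\phi^t\phi: \Lambda_A \to \Lambda_A$ in Section \ref{lattice_results} with $\Lambda_A = \mathfrak{X}_A$, $\Lambda_B = \mathfrak{X}_B$, and the roles of $\phi, \phi^t$ interchanged relative to the abelian-variety notation.

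With the bookkeeping settled, the key identity is the short exact sequence \eqref{lattice_cmponent_gp_seq}, which says precisely that $\Phi_A(\bar k) \cong \Phi_{\mathfrak{X}_A}$ (the discriminant group of the lattice $\mathfrak{X}_A$), and likewise $\Phi_B(\bar k) \cong \Phi_{\mathfrak{X}_B}$. Taking $\textup{Fr}$-invariants and using $c(A/F) = \#\Phi_A(\bar k)^{\textup{Fr}}$, $c(B/F) = \#\Phi_B(\bar k)^{\textup{Fr}}$, the left-hand side $\textup{ord}_p \frac{c(A/F)}{c(B/F)}$ becomes $\textup{ord}_p \frac{\#\Phi_{\mathfrak{X}_A}^G}{\#\Phi_{\mathfrak{X}_B}^G}$. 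Proposition \ref{main_size_prop} (in its second, odd-$p$ form, which invokes Proposition \ref{order_of_betts_group} to kill the $\mathfrak{B}$-terms) then gives this $\equiv \textup{ord}_p \det(\phi^t\phi \mid \mathfrak{X}_A^G) \pmod 2$, which is $\textup{ord}_p \det(f \mid \mathfrak{X}_A^{\textup{Fr}})$ once one identifies $\mathfrak{X}_A^G = \mathfrak{X}_A^{\textup{Fr}}$. That identification is immediate since the $G$-action on $\mathfrak{X}_A$ is by definition the one generated by $\textup{Fr}$; if $G$ is taken to be an honest finite cyclic quotient one just notes the invariants are the same.

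The only genuinely delicate point is making sure the cyclic-group hypothesis in Section \ref{lattice_results} is legitimately available: $\textup{Gal}(\bar k/k) = \widehat{\mathbb{Z}}$ is procyclic but not finite, whereas Lemma \ref{main prelim lattice lemma} and Proposition \ref{main_size_prop} are stated for finite cyclic $G$. I would handle this by observing that the action of $\textup{Gal}(\bar k/k)$ on the finite-rank lattices $\mathfrak{X}_A$ and $\mathfrak{X}_B$ factors through a common finite quotient — the image in $\textup{GL}(\mathfrak{X}_A) \times \textup{GL}(\mathfrak{X}_B)$ is finite (these lattices have finite automorphism groups) and topologically generated by the image of $\textup{Fr}$, hence finite cyclic — and that forming $G$-invariants, $H^1(G,-)$, etc., is unchanged on passing to this finite quotient (for $H^1$ of a finitely generated module under a procyclic group this is standard). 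Applying Proposition \ref{main_size_prop} to this finite cyclic $G$ then yields the congruence, and I would remark briefly that everything in sight ($\Phi_A^{\textup{Fr}}$, $\mathfrak{X}_A^{\textup{Fr}}$) is computed over this finite quotient so no information is lost. This compatibility check is routine but is the one place the argument requires a small amount of care rather than pure citation.
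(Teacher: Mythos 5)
Your argument is correct and is essentially the paper's own proof: one applies Proposition \ref{main_size_prop} with $\Lambda_A=\mathfrak{X}_A$, $\Lambda_B=\mathfrak{X}_B$ and the monodromy pairings, takes $G$ to be the finite cyclic group through which $\textup{Fr}$ acts (the paper phrases this as $\textup{Gal}(k'/k)$ for a finite extension trivialising the action), identifies the discriminant groups with the component groups via \eqref{lattice_cmponent_gp_seq}, and swaps the roles of $\phi$ and $\phi^t$ exactly as you do. Only a cosmetic remark: the image of $\textup{Gal}(\bar k/k)$ is finite not because $\textup{GL}(\mathfrak{X}_A)$ is finite (it is not, for rank $\geq 2$), but because the action on the discrete finitely generated module $\mathfrak{X}_A$ is continuous (equivalently, because it preserves the positive-definite pairing).
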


\begin{proof}  
This is a consequence of Proposition \ref{main_size_prop}. To apply that result, in the notation of Section \ref{lattice_results} we take $\Lambda_A$ to be the character group $\mathfrak{X}_A$, equipped with the pairing $\left \langle~,~\right \rangle_A$, and we take $\Lambda_B$ to be the character group $\mathfrak{X}_B$, equipped with the pairing $\left \langle ~,~\right \rangle_B$. Take $G$ to be the Galois group of any finite extension $k'/k$ such that $\textup{Gal}(\bar{k}/k')$ acts trivially on both $\mathfrak{X}_A$ and $\mathfrak{X}_B$. Thus $G$ is a finite cyclic group acting on $\Lambda_A$ and $\Lambda_B$, preserving the pairings. As a result of \eqref{lattice_cmponent_gp_seq}, the corresponding discriminant group  $\Phi_{\Lambda_A}$  is isomorphic to the component group $\Phi_A(\bar{k})$ as a $G$-module, and similarly for $\Phi_{\Lambda_B}$. Finally, we take the maps $\phi$, $\phi^t$ in Proposition \ref{main_size_prop} to be those induced by the isogenies $\phi^t$, $\phi$ respectively. 
\end{proof}

\begin{cor} \label{cor:uniform_residue_char}
Let $p$ be an odd prime. Then we have 
\[(-1)^{\textup{ord}_p\frac{\#B(F)/\phi A(F)}{\#A(F)[\phi]}}=  (-1)^{\textup{ord}_p \textup{ det}(f \mid \mathfrak{X}_A^{\textup{Fr}})} \cdot (-1)^{\textup{ord}_p \big|\textup{det}D(\widehat{\phi})\big|_F}.\]
\end{cor}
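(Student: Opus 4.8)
The plan is to combine the three ingredients that have just been assembled: Lemma \ref{size_of_coker_tamagawa}, which rewrites $\frac{\#B(F)/\phi A(F)}{\#A(F)[\phi]}$ as $\frac{c(B/F)}{c(A/F)}\cdot|\det D(\widehat\phi)|_F^{-1}$, and Proposition \ref{Tam_comp_prop}, which controls the $p$-adic valuation of the Tamagawa quotient $c(A/F)/c(B/F)$ in terms of $\det(f\mid\mathfrak{X}_A^{\textup{Fr}})$. So the argument is essentially bookkeeping with $\textup{ord}_p$ and signs.

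Concretely, I would first take $\textup{ord}_p$ of the identity in Lemma \ref{size_of_coker_tamagawa}, obtaining
\[
\textup{ord}_p\frac{\#B(F)/\phi A(F)}{\#A(F)[\phi]}=\textup{ord}_p\frac{c(B/F)}{c(A/F)}-\textup{ord}_p\bigl|\det D(\widehat\phi)\bigr|_F.
\]
Reducing mod $2$ and noting that $-1\equiv 1\pmod 2$, the right-hand side becomes $\textup{ord}_p\frac{c(A/F)}{c(B/F)}+\textup{ord}_p\bigl|\det D(\widehat\phi)\bigr|_F\pmod 2$. Now I substitute Proposition \ref{Tam_comp_prop}, which says exactly that $\textup{ord}_p\frac{c(A/F)}{c(B/F)}\equiv\textup{ord}_p\det(f\mid\mathfrak{X}_A^{\textup{Fr}})\pmod 2$. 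Exponentiating $(-1)$ to each side then yields the claimed equality
\[
(-1)^{\textup{ord}_p\frac{\#B(F)/\phi A(F)}{\#A(F)[\phi]}}=(-1)^{\textup{ord}_p\det(f\mid\mathfrak{X}_A^{\textup{Fr}})}\cdot(-1)^{\textup{ord}_p|\det D(\widehat\phi)|_F}.
\]

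Since Proposition \ref{Tam_comp_prop} is invoked as a black box (it was proved just above via Proposition \ref{main_size_prop}), there is no real obstacle here; the only thing to be mildly careful about is the direction of the fraction $c(A/F)/c(B/F)$ versus $c(B/F)/c(A/F)$, but modulo $2$ the sign of the exponent is irrelevant, so the two are interchangeable. Thus the corollary is an immediate formal consequence, and the proof is just the two-line computation above.

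\begin{proof}
Taking $\textup{ord}_p$ of the identity in Lemma \ref{size_of_coker_tamagawa} gives
\[
\textup{ord}_p\frac{\#B(F)/\phi A(F)}{\#A(F)[\phi]}=\textup{ord}_p\frac{c(B/F)}{c(A/F)}-\textup{ord}_p\bigl|\det D(\widehat\phi)\bigr|_F.
\]
Reducing modulo $2$, and using that $\textup{ord}_p\frac{c(B/F)}{c(A/F)}\equiv\textup{ord}_p\frac{c(A/F)}{c(B/F)}\pmod 2$ and $-x\equiv x\pmod 2$, we obtain
\[
\textup{ord}_p\frac{\#B(F)/\phi A(F)}{\#A(F)[\phi]}\equiv\textup{ord}_p\frac{c(A/F)}{c(B/F)}+\textup{ord}_p\bigl|\det D(\widehat\phi)\bigr|_F\pmod 2.
\]
By Proposition \ref{Tam_comp_prop}, $\textup{ord}_p\frac{c(A/F)}{c(B/F)}\equiv\textup{ord}_p\det\bigl(f\mid\mathfrak{X}_A^{\textup{Fr}}\bigr)\pmod 2$. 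Substituting and raising $-1$ to the power of each side yields the claimed equality.
\end{proof}
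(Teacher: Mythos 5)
Your proof is correct and matches the paper's argument, which likewise just combines Lemma \ref{size_of_coker_tamagawa} with Proposition \ref{Tam_comp_prop}; the only difference is that you spell out the mod-$2$ bookkeeping that the paper leaves implicit.
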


\begin{proof}
Combine Proposition \ref{Tam_comp_prop} with Lemma \ref{size_of_coker_tamagawa}.
\end{proof}

\subsubsection{Residue characteristic different from $p$}

As above, $F$ is a nonarchimedean local field of characteristic $0$, with residue field $k$. Recall that $A$ is assumed to be semistable over $F$. For this subsection, we take $p$ to be an odd prime and assume that the characteristic of $k$ is different from $p$.

\begin{lemma} \label{chi_unram_trivial_lemma}
The character
$\chi_{A[\phi],p}:G_F\rightarrow \mathbb{F}_p^{\times}$
is unramified. In particular, we have \[\big(-1,\chi_{A[\phi],p}\big)_F=1.\]
\end{lemma}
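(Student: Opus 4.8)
The plan is to show that $\chi_{A[\phi],p}$ factors through the quotient of $G_F$ by the inertia subgroup $I_F$, after which the claim $(-1,\chi_{A[\phi],p})_F=1$ follows from the fact that $-1\in \mathcal{O}_F^\times$ maps to an element of $I_F^{\mathrm{ab}}$ under the local reciprocity map, so any unramified character vanishes on it. So the whole content is the unramifiedness assertion.

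First I would reduce to studying the $p$-primary part $A[\phi][p^\infty]=A[\phi^{p^\infty}]$, or rather the relevant piece of the $p$-power torsion, since by Definition \ref{def:the_galois_character} the character $\chi_{A[\phi],p}$ depends only on $A[\phi][p^\infty]$ as a $G_F$-module, and in fact, by Remark \ref{semisimple_remark} together with Lemma \ref{lem:semisimple}, only on its semisimplification as an $\mathbb{F}_p[G_F]$-module. Now since $A$ has semistable reduction over $F$ and $\mathrm{char}(k)\neq p$, the action of inertia $I_F$ on the full $p$-adic Tate module $T_pA$ is unipotent (this is the semistable reduction criterion / Grothendieck's monodromy theorem): there is an exact sequence of $I_F$-modules relating $T_pA$ to the Tate module of the toric part and an abelian-variety-with-good-reduction part, with $I_F$ acting trivially on the graded pieces. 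Consequently $I_F$ acts trivially on the semisimplification of $A[p^n]$ for every $n$, hence on the semisimplification of $A[\phi][p^\infty]$. Therefore $\chi_{A[\phi],p}|_{I_F}$ is trivial, i.e. the character is unramified.

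The step I expect to be the main (only real) obstacle is pinning down the statement that semistable reduction forces the $I_F$-action on the mod-$p$ (or mod-$p^n$) torsion to be unipotent, and hence trivial on semisimplifications — one must be slightly careful that "unipotent on $T_pA$" does pass correctly to the finite quotients $A[p^n]$ and to subquotients such as $A[\phi][p^\infty]$, but this is formal: a subquotient of a module on which a group acts unipotently is again acted on unipotently, and a finite $\mathbb{F}_p$-vector space with unipotent action of a group has trivial semisimplification. I would cite the appropriate reference for the semistable reduction criterion (e.g. \cite{SGA7IX}, or Grothendieck's monodromy theorem as recalled in the sources already referenced) rather than reproving it. Everything else — the behaviour of $\chi_{M,p}$ under semisimplification, and the computation that an unramified character kills $-1$ via local class field theory — is either already established in Section \ref{sec_2} or is the standard normalisation of the reciprocity map.
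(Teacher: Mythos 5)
Your proposal is correct and takes essentially the same route as the paper: the key input in both is that semistable reduction together with residue characteristic different from $p$ forces inertia to act unipotently on the $p$-power torsion (so its image in $\textup{Aut}(A[p^n])$ is a $p$-group), whence the $\mathbb{F}_p^{\times}$-valued character is trivial on inertia, and the vanishing of $\big(-1,\chi_{A[\phi],p}\big)_F$ is the standard fact from local class field theory that units are killed by unramified characters. The only (cosmetic) difference is that you pass through triviality of the inertia action on the semisimplification of $A[\phi][p^\infty]$, while the paper argues directly that the character factors through $\textup{Gal}(F(A[p])/F)$ and its inertia subgroup, being a $p$-group, must map trivially to a group of order prime to $p$.
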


\begin{proof}
By definition, $\chi_{A[\phi],p}$ factors through the Galois group of $F(A[p])/F$. Since  $\chi_{A[\phi],p}$ takes values in $\mathbb{F}_p^{\times}$, which has order coprime to $p$, it suffices to show that the inertia subgroup of $\textup{Gal}(F(A[p])/F)$ has order a power of $p$. Since the residue characteristic of $F$ is assumed different from $p$, this is a well-known consequence of the semistability of $A$. That  $\big(-1, \chi_{A[\phi],p}\big)_F=1$ now follows from the fact that, by local class field theory, all units are norms from any fixed unramified extension.
\end{proof}

\begin{proposition} \label{prop:res_not_p_arch_local_formula}
 Theorem \ref{main_thm_local} holds when $F$ is nonarchimedean and has residue characteristic different from $p$. 
\end{proposition}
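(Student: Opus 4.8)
The plan is to assemble the three ingredients already available: Proposition~\ref{main_arch_prop}'s nonarchimedean analogue, namely Corollary~\ref{cor:uniform_residue_char}, together with Lemma~\ref{chi_unram_trivial_lemma}, and to show that the error term $(-1)^{\textup{ord}_p|\det D(\widehat\phi)|_F}$ vanishes when the residue characteristic $\ell$ of $k$ is different from $p$. Indeed, Corollary~\ref{cor:uniform_residue_char} already gives
\[
(-1)^{\textup{ord}_p\frac{\#B(F)/\phi A(F)}{\#A(F)[\phi]}}=(-1)^{\textup{ord}_p\det(f\mid \mathfrak{X}_A^{\textup{Fr}})}\cdot(-1)^{\textup{ord}_p|\det D(\widehat\phi)|_F},
\]
and Lemma~\ref{chi_unram_trivial_lemma} gives $(-1,\chi_{A[\phi],p})_F=1$. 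So the statement of Theorem~\ref{main_thm_local} in this case,
\[
(-1)^{\textup{ord}_p\frac{\#B(F)/\phi A(F)}{\#A(F)[\phi]}}\cdot(-1,\chi_{A[\phi],p})_F=(-1)^{\textup{ord}_p\det(f\mid \mathfrak{X}_A^{\textup{Fr}}\otimes\mathbb{Q}_p)},
\]
reduces entirely to showing $\textup{ord}_p|\det D(\widehat\phi)|_F\equiv 0\pmod 2$, and in fact I expect it to be $0$.

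First I would recall that $|\det D(\widehat\phi)|_F$ measures the index of the image of the formal group homomorphism $\widehat\phi:\mathcal{F}_A\to\mathcal{F}_B$ on $\mathcal{O}_F$-points (up to the relevant filtration), or equivalently is related to the different/discriminant of $\phi$ along the identity section. The cleanest route is: since $k$ has characteristic $\ell\neq p$, the formal groups $\mathcal{F}_A,\mathcal{F}_B$ are $\ell$-divisible (their torsion is $\ell$-primary), so $\widehat\phi$ restricted to the prime-to-$\ell$ part — in particular any $p$-power torsion — is an isomorphism on formal group level; more directly, the map on tangent spaces $D(\widehat\phi):\textup{Lie}(\mathcal{F}_A)\to\textup{Lie}(\mathcal{F}_B)$ has determinant whose normalised valuation divides $v_F(\deg\phi)$-worth of powers of $\ell$ only, so that $\det D(\widehat\phi)$ is an $\ell$-unit times a unit, hence a $p$-adic unit. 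Concretely, $|\det D(\widehat\phi)|_F$ is a power of $\ell$ (this is the statement that the "formal part" of the local Tamagawa-type factor is supported at the residue characteristic), and since $\ell\neq p$ we get $\textup{ord}_p|\det D(\widehat\phi)|_F=0$.

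Combining these three facts finishes the proof. The main obstacle is pinning down precisely why $|\det D(\widehat\phi)|_F$ is a power of the residue characteristic $\ell$: I would argue that $D(\widehat\phi)$ is the matrix of $\phi$ acting on the cotangent (or tangent) space at the origin of the Néron models, an $\mathcal{O}_F$-linear map whose cokernel is killed by $\deg\phi$, but more to the point whose cokernel is a finite $\mathcal{O}_F$-module on which some power of $\ell$ acts as zero — because in residue characteristic $\ell$ the reduction $\bar{\phi}:\mathcal{A}_k\to\mathcal{B}_k$ is a separable-up-to-$\ell$-power-Frobenius isogeny of group schemes over $k$, so the induced map on Lie algebras is an isomorphism after inverting $\ell$. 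Alternatively, and perhaps more cleanly, one can invoke that $\phi$ induces an isomorphism on $p$-adic Tate modules of the formal groups (since these coincide with the $p$-adic Tate modules of the abelian varieties, which $\phi$ maps isomorphically after $\otimes\mathbb{Q}_p$) and that $D(\widehat\phi)$ computes the determinant of $\phi$ on $\textup{Lie}\,\mathcal{F}_A\otimes\mathbb{Q}_p\cong T_p\mathcal{F}_A\otimes\mathbb{Q}_p$ up to a $p$-adic unit — forcing $\textup{ord}_p|\det D(\widehat\phi)|_F=0$ directly. Either way the verification is short, and the rest of the proposition is just bookkeeping with the results cited above.
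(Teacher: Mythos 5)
Your proposal is correct and takes essentially the same route as the paper: combine Corollary \ref{cor:uniform_residue_char} with Lemma \ref{chi_unram_trivial_lemma}, and note that $\textup{ord}_p\big|\det D(\widehat{\phi})\big|_F=0$. Be aware, though, that this last point requires no formal-group input whatsoever --- the normalised absolute value of any nonzero element of $\mathcal{O}_F$ is an integer power of $\#k$, hence a power of the residue characteristic $\ell\neq p$ --- so your lengthy justification is unnecessary, and your ``alternative'' argument via $p$-adic Tate modules of the formal groups is actually wrong in this setting (those Tate modules vanish when the residue characteristic differs from $p$), even though the main line of your argument stands.
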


\begin{proof}  
Since $F$ is assumed to have residue characteristic different from $p$,  we have 
\[\textup{ord}_p \big|\textup{det}D(\widehat{\phi})\big|_F=0.\]
 The result now follows from Corollary  \ref{cor:uniform_residue_char} and Lemma \ref{chi_unram_trivial_lemma}.
\end{proof}

\subsubsection{Residue characteristic equal to $p$}

Suppose for this subsection that $F$ is a finite extension of $\mathbb{Q}_p$.  Recall that $A$ is assumed to be semistable over $F$. By Corollary \ref{cor:uniform_residue_char}, to prove that the local formula in Theorem \ref{main_thm_local} holds for $A$ and $\phi$, it suffices to show that 
\begin{equation} \label{eq:remaining_to_prove}
(-1)^{\textup{ord}_p \big|\textup{det}D(\widehat{\phi})\big|_F}=\big(-1,\chi_{A[\phi],p}\big)_F.
\end{equation}

The (suggested) identity \eqref{eq:remaining_to_prove} makes no reference to the assumed existence of principal polarisations on $A$ and $B$. It is conceivable that it holds for arbitrary (say semistable) isogeneous abelian varieties $A$, $B$. We prove it below only when $A$ is assumed to have good ordinary reduction, though it is likely possible to improve on this using ideas from \cite[Section 2 and Appendix]{MR2551757}. We remark also that if $\phi$ factors as $\phi_1\phi_2$, then both sides of \eqref{eq:remaining_to_prove} split as a product of the corresponding terms for $\phi_1$ and $\phi_2$. In particular, to prove \eqref{eq:remaining_to_prove} we could assume that $A[\phi]\subseteq A[p]$. Having made this reduction, the formula \eqref{eq:remaining_to_prove} for semistable elliptic curves follows immediately from the material in \cite[Sections 5, 6]{MR2389862}.

In the statement of the following proposition, we do not assume that either $A$ or $B$ is principally polarised. 

\begin{proposition} \label{main_eq_res_prop}
Let $A$ be an abelian variety over $F$ with good ordinary reduction, and let $\phi:A\rightarrow B$ be an isogeny. Then we have 
\[(-1)^{\textup{ord}_p \big|\textup{det}D(\widehat{\phi})\big|_F}=\big(-1,\chi_{A[\phi],p}\big)_F.\]
\end{proposition}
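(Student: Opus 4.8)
The plan is to exploit the good ordinary reduction hypothesis to split the $p$-divisible group $A[p^\infty]$ as an extension of an \'etale part by a multiplicative part, and to track both sides of the claimed identity through this decomposition. First I would reduce to the case $A[\phi] \subseteq A[p]$, as already noted in the text preceding the proposition; then both sides become multiplicative in the obvious filtration of $A[\phi]$ by the intersections with the connected-\'etale pieces, so it suffices to treat the ``multiplicative'' and ``\'etale'' parts of $A[\phi]$ separately. For the multiplicative part, the connected component $\widehat{A}[p]$ of the formal group is, after base change to $F^{\mathrm{nr}}$ (or a finite extension trivialising the reduction), an extension of copies of $\mu_p$; by Example \ref{mu_example}, the character $\chi_{A[\phi],p}$ restricted to this part is a power of the mod-$p$ cyclotomic character, and $(-1,\chi_{\mathrm{cyc},p})_F$ is computed by the standard formula for the Hilbert symbol against the cyclotomic character — this is where the factor of $\mathrm{ord}_p|\det D(\widehat\phi)|_F$ will come from, since $D(\widehat\phi)$ on the multiplicative part is (up to a unit) the matrix describing the isogeny $\mu_{p^?}^{\oplus} \to \mu_{p^?}^{\oplus}$, whose determinant has $p$-adic valuation equal to the length of the kernel. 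For the \'etale part, $\chi_{A[\phi],p}$ is again the determinant of Galois acting on a semisimplification that is an unramified $\mathbb{F}_p$-module (by the connected-\'etale sequence over $\mathcal{O}_F$ and good reduction), so $(-1,\cdot)_F$ is trivial there by Lemma \ref{chi_unram_trivial_lemma}, while the corresponding block of $D(\widehat\phi)$ is a unit (the formal group isogeny is \'etale on that part), so $\mathrm{ord}_p$ of its determinant vanishes too; both sides are $1$.

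Concretely, I would set up the argument as follows. Let $\widehat{A}$ be the formal group of the N\'eron model; good ordinary reduction gives a short exact sequence $0 \to \widehat{A}^{\mathrm{mult}} \to \widehat{A} \to \widehat{A}^{\text{\'et-formal}} \to 0$ of formal groups over $\mathcal{O}_F$ after passing to $\mathbb{Z}_p^{\mathrm{nr}}$, with $\widehat{A}^{\mathrm{mult}}[p^\infty] \cong \mu_{p^\infty}^{g}$ as $G_F$-modules up to an unramified twist, i.e. $A[p]$ sits in an exact sequence of $G_F$-modules $0 \to \mu_p \otimes U_1 \to A[p] \to U_2 \to 0$ with $U_1, U_2$ unramified $\mathbb{F}_p$-representations of $G_F$ of dimension $g$. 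Compatibly, $\phi$ induces maps on each graded piece, and $A[\phi]$ inherits a two-step filtration with multiplicative-type and \'etale-type graded pieces $A[\phi]^{\mathrm{mult}}$, $A[\phi]^{\text{\'et}}$. Then $\chi_{A[\phi],p} = \chi_{A[\phi]^{\mathrm{mult}},p}\cdot\chi_{A[\phi]^{\text{\'et}},p}$ by Lemma \ref{lem:semisimple}. The key computation is the multiplicative block: here $\chi_{A[\phi]^{\mathrm{mult}},p} = \chi_{\mathrm{cyc},p}^{m}$ where $m$ is the multiplicative-type length of $A[\phi]$, by Example \ref{mu_example} (the unramified twist $U_1$ contributes trivially to the $\mod p$ Hilbert symbol against $-1$, again by the argument of Lemma \ref{chi_unram_trivial_lemma}); and $\mathrm{ord}_p|\det D(\widehat\phi)|_F = \mathrm{length}_{\mathbb{Z}_p}\big(\widehat A(\mathfrak{m})/\widehat\phi\,\widehat A(\mathfrak{m})\big)$-type quantity which, restricted to the multiplicative formal group $\mu_{p^\infty}^g$, equals $m$ as well. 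Finally, $(-1, \chi_{\mathrm{cyc},p})_F = (-1)^{\mathrm{ord}_p(\cdot)}$ via the explicit reciprocity / Hilbert symbol computation for the cyclotomic character, and one reads off $(-1,\chi_{A[\phi],p})_F = (-1)^{m} = (-1)^{\mathrm{ord}_p|\det D(\widehat\phi)|_F}$.

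The main obstacle, as the text itself flags, is making the identification of $D(\widehat\phi)$ in terms of the connected-\'etale decomposition completely rigorous: one needs that, for $A$ with good ordinary reduction, the formal group isogeny $\widehat\phi$ is, on the \'etale-type part, an isomorphism on tangent spaces (so contributes a unit to $\det D(\widehat\phi)$), while on the multiplicative part its determinant has valuation equal to the $\mathbb{Z}_p$-length of the multiplicative part of $\ker\phi$. This is essentially the assertion that the Dieudonn\'e module (or the tangent space) of the $p$-divisible group splits compatibly with $\phi$ and that the matrix of $\phi$ on the multiplicative summand, which is an endomorphism-type map between direct sums of $\mu_{p^?}$'s, has the expected elementary-divisor structure. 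I expect this to follow cleanly from the theory over $\mathcal{O}_F^{\mathrm{nr}}$ (e.g. via Dieudonn\'e theory, or more elementarily by a direct analysis of homomorphisms of formal multiplicative groups), but it is the one place where care is required. The reduction to $A[\phi]\subseteq A[p]$, the multiplicativity in the connected-\'etale filtration, the triviality of the unramified contributions (Lemma \ref{chi_unram_trivial_lemma}), and the Hilbert-symbol evaluation for $\chi_{\mathrm{cyc},p}$ are all routine once that structural input is in place. As a sanity check, the elliptic-curve case recovers the computation of \cite[Sections 5, 6]{MR2389862} referenced above.
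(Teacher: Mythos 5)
Your strategy is essentially the paper's: the proof there also passes to the completed maximal unramified extension, uses ordinariness to identify $\mathcal{F}_A$ and $\mathcal{F}_B$ with $\widehat{\mathbb{G}}_m^g$, invokes $\textup{End}_{\mathcal{O}}(\widehat{\mathbb{G}}_m^g)\cong \textup{Mat}_g(\mathbb{Z}_p)$ together with Smith normal form to see $\ker(\widehat{\phi})\cong\oplus_i\mu_{p^{n_i}}$ as an inertia module (this is exactly the ``structural input'' you flag as the main obstacle, and it is handled just as you predict, via Fr\"ohlich's description of endomorphisms of $\widehat{\mathbb{G}}_m$), and disposes of the \'etale part by noting that $A[\phi]/\ker(\widehat{\phi})$ injects into the special fibre, hence is unramified, so Lemma \ref{lem:semisimple} and the norm argument of Lemma \ref{chi_unram_trivial_lemma} apply. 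Your preliminary reduction to $A[\phi]\subseteq A[p]$ is harmless but not needed for this route.

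Two points need correcting, though. First, the endgame numerics: with the \emph{normalised} absolute value one has $\textup{ord}_p\big|\det D(\widehat{\phi})\big|_F=\pm\, v_F(\det D(\widehat{\phi}))\cdot[k:\mathbb{F}_p]=\pm\, m\, e(F/\mathbb{Q}_p)[k:\mathbb{F}_p]=\pm\, m[F:\mathbb{Q}_p]$, not $m$; and correspondingly $\big(-1,\chi_{\textup{cyc},p}^m\big)_F=\big(-1,\chi_{\textup{cyc},p}\big)_{\mathbb{Q}_p}^{m[F:\mathbb{Q}_p]}=(-1)^{m[F:\mathbb{Q}_p]}$ (the norm of $-1$ to $\mathbb{Q}_p$ is $(-1)^{[F:\mathbb{Q}_p]}$), not $(-1)^m$. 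Your two omissions of the factor $[F:\mathbb{Q}_p]$ cancel, so the proposition survives, but both displayed identities in your last step (``$(-1,\chi_{A[\phi],p})_F=(-1)^m=(-1)^{\textup{ord}_p|\det D(\widehat{\phi})|_F}$'') are false whenever $[F:\mathbb{Q}_p]$ is even, and a careful write-up must track $e$, $f$ and the norm map explicitly. Second, a conceptual slip: there is no ``\'etale block'' of $D(\widehat{\phi})$ whose determinant is a unit. $D(\widehat{\phi})$ is the $g\times g$ Jacobian of the isogeny of \emph{formal} groups, and in the good ordinary case the formal group is entirely of multiplicative type over $\mathcal{O}_E$; the \'etale part of $A[\phi]$ does not meet the formal group at all, which is precisely why it contributes to neither side. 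This does not affect the outcome, but the justification should be the one above (unramifiedness of the quotient $A[\phi]/\ker(\widehat{\phi})$), not a claim about a unit block of $D(\widehat{\phi})$.
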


\begin{proof}
As above, let $g=\dim A$ and let $\mathcal{F}_A$, $\mathcal{F}_B$ be $g$-dimensional formal group laws over $\mathcal{O}_F$ associated to $A$, $B$ respectively.  Denote by $\mathcal{F}_A(\mathfrak{m})$ the maximal ideal $\mathfrak{m}$ of the ring of integers of $\overline{F}$, equipped with the group structure induced by $\mathcal{F}_A$, and define $\mathcal{F}_B(\mathfrak{m})$ similarly. Write $\ker(\widehat{\phi})$ for the kernel of the map  $\mathcal{F}(\mathfrak{m})\rightarrow \mathcal{G}(\mathfrak{m})$ induced by $\widehat{\phi}$. We have an inclusion of $G_F$-modules $\ker(\widehat{\phi})\subseteq A[\phi]$, and the quotient is unramified (since it injects into $\mathcal{A}_k(\overline{k})$). By Lemma  \ref{lem:semisimple}, we thus have
\begin{equation} \label{red_to_formal_group}
\big(-1,\chi_{A[\phi],p}\big)_F=\big(-1,\chi_{\ker(\widehat{\phi})}\big)_F.
\end{equation}

Note that we have
\begin{equation} \label{eq:version_of_Schaefer}
\textup{ord}_p \big|\textup{det}D(\widehat{\phi})\big|_F=\textup{ord}_p (\#k)^{v_F(\textup{det}D(\widehat{\phi}))}=v_F(\textup{det}D(\widehat{\phi}))\cdot [k:\mathbb{F}_p],
\end{equation}
where $v_F$ denotes the normalised valuation on $F$. 

Denote by $E$ the completion of the maximal unramified extension of $F$, and denote by $\mathcal{O}$ its ring of integers. Since $A$ has good ordinary reduction, so does $B$, and over $\mathcal{O}$ we have isomorphisms of formal groups 
\[\alpha:\mathcal{F}_A\stackrel{\sim}{\longrightarrow}\widehat{\mathbb{G}}_m^g\quad \textup{ and }\quad \beta:\mathcal{F}_B\stackrel{\sim}{\longrightarrow}\widehat{\mathbb{G}}_m^g,\]
where $\widehat{\mathbb{G}}_m$ denotes the formal multiplicative group. Write $\eta=\beta\widehat{\psi}\alpha^{-1}\in \textup{End}_\mathcal{O}(\widehat{\mathbb{G}}_m^g)$.  
The map sending an endomorphism to its Jacobian matrix gives an isomorphism $\textup{End}_\mathcal{O}(\widehat{\mathbb{G}}_m^g)\cong \textup{Mat}_{g}(\mathbb{Z}_p)$ (the case of general $g$ formally reduces to the case $g=1$, which is a consequence of  \cite[IV.1, Theorem 1]{MR242837}). By properties of Smith normal form, we can find invertible matrices $U,V\in \textup{Mat}_g(\mathbb{Z}_p)$ such that  $D(\eta)=U\Lambda V$, where $\Lambda$ is a diagonal matrix with entries in $\mathbb{Z}_p$.  Suppose $\Lambda$ has diagonal entries $(\lambda_1,...,\lambda_g)$. Re-choosing $U$ and $V$ if necessary, we can assume that, for $i=1,...,g$, we have $\lambda_i=p^{n_i}$ for some integers $n_1,...,n_g$. Then $\ker(\widehat{\phi})\cong \oplus_{i=1}^g\mu_{p^{n_i}}$ as an $I_F$-module, where $I_F$ denotes the inertia group of $F$. Let $n=\sum_{i=1}^gn_i$.  Combining the above discussion with Example \ref{mu_example},  we see that 
\begin{equation} \label{eq:D_formula}
v_F(\textup{det}D(\widehat{\phi}))=v_F(\textup{det}D(\eta))=e(F/\mathbb{Q}_p)n,
\end{equation}
 and that the restriction of $\chi_{\ker(\widehat{\phi})}$ to $I_F$ is equal to the restriction of $\chi_{\textup{cyc},p}^n$ to $I_F$, where $\chi_{\textup{cyc},p}$ is the mod $p$ cyclotomic character. In particular, we have 
 \begin{eqnarray*}
 \big(-1,\chi_{\ker(\widehat{\phi})}\big)_F=\big(-1,\chi_{\textup{cyc},p}^n)_F
= \big(-1,\chi_{\textup{cyc},p})^{[F:\mathbb{Q}_p]n}_{\mathbb{Q}_p}=(-1)^{n[F:\mathbb{Q}_p]}.
 \end{eqnarray*}
 The result now follows by combining this equality with  \eqref{red_to_formal_group}, \eqref{eq:version_of_Schaefer} and \eqref{eq:D_formula}.
\end{proof}

\section{Proof of Theorem \ref{thm:main_local_formula}} \label{sec_5}

In this section we turn to the global setting, and deduce Theorem \ref{thm:main_local_formula} by combining Theorem \ref{main_thm_local} with standard global duality theorems. Let $K$ be a number field. 

\begin{notation}
Following  \cite[Definition 4.1]{MR2680426}, for an isogeny $\phi:A\rightarrow B$ between abelian varieties over $K$, we write 
\[Q(\phi)=\#\coker\big( A(K)/A(K)_{\textup{tors}}\stackrel{\phi}{\rightarrow}B(K)/B(K)_{\textup{tors}}\big)\cdot \#\ker\big(\Sha(A)_{\textup{div}}\stackrel{\phi}{\rightarrow} \Sha(B)_{\textup{div}}\big).\]
Here $A(K)_{\textup{tors}}$ denotes the torsion subgroup of $A(K)$, and $\Sha(A)_{\textup{div}}$ denotes the divisible part of the Shafarevich--Tate group of $A$. 
\end{notation}

\begin{remark} \label{global_cokernel_remark}
As observed in \cite[Section 2]{MR2475965}, for each prime $p$, we have
\[\textup{ord}_pQ(\phi)=\textup{ord}_p\#\coker\big(X_p(A)/X_p(A)_{\textup{tors}}\stackrel{\phi}{\rightarrow} X_p(B)/X_p(B)_{\textup{tors}}\big),\]
where $X_p(A)= \textup{Hom}\big(\displaystyle{\lim_\rightarrow}\textup{ Sel}_{p^n}(A), \mathbb{Q}_{p}/\mathbb{Z}_{p}\big)$ is the Pontryagin dual of the $p^\infty$-Selmer group of~$A$. 
\end{remark}

For the rest of this section, we take all the notation from the statement of  Theorem \ref{thm:main_local_formula}. In particular, $\phi:A\rightarrow B$ is an isogeny between principally polarised abelian varieties over $K$, $\phi^t:B\rightarrow A$ is the dual isogeny, and $f=\phi^t\phi\in \textup{End}(A)$. 

\begin{proof}[Proof of Theorem \ref{thm:main_local_formula}]
By Theorem \ref{main_thm_local} and  Artin reciprocity, we have 
\[\prod_{v\mid \infty}(-1)^{ \textup{ord}_p \det(f \mid \Omega^1(A/K_v))} \cdot \prod_{v\nmid \infty} (-1)^{\textup{ord}_p \det(f \mid (\mathfrak{X}_{A,v}\otimes \mathbb{Q}_p)^{\textup{Fr}_v})}=\prod_v(-1)^{\textup{ord}_p \frac{\#B(K_v)/\phi A(K_v)}{\#A(K_v)[\phi]}}.\]
In the proof of \cite[Theorem I.7.3]{MR2261462}, it is shown (using global duality theorems) that 
\[\prod_v \frac{\#B(K_v)/\phi A(K_v)}{\#A(K_v)[\phi]}=\frac{\#B(K)/\phi(A(K))}{\#A(K)[\phi]}\cdot \frac{\#B(K)[\phi^t]}{\#A(K)/\phi^t(B(K))}\cdot \frac{\#\Sha(A)[\phi]}{\#\Sha(B)[\phi^t]}.\]
It is then shown in the proof of \cite[Theorem 4.3]{MR2680426} that the $p$-adic valuaton of the right hand side of the above displayed formula is equal to 
\begin{equation} \label{half_way_there_global_formula}
\textup{ord}_p \frac{Q(\phi^t)}{Q(\phi)}\cdot \frac{\#A(K)_{\textup{tors}}^2}{\#B(K)_{\textup{tors}}^2}\cdot\frac{\#\Sha_0(B)[p^\infty]}{\#\Sha_0(A)[p^\infty]},
\end{equation}
where $\Sha_0(A)$  is the quotient of $\Sha(A)$ by its maximal divisible subgroup, and similarly for~$\Sha_0(B)$. 

Since both $A$ and $B$ are principally polarised and $p$ is assumed odd, standard properties of the Cassels--Tate pairing (see \cite[Theorem 8]{MR1740984}) give 
\[\textup{ord}_p\#\Sha_0(A)[p^\infty]\equiv 0 \equiv \textup{ord}_p\#\Sha_0(B)[p^\infty] \pmod 2.\]
Reducing \eqref{half_way_there_global_formula} modulo $2$ and using \cite[Lemma 4.2 (1)]{MR2680426}, we conclude that 
\[\prod_v(-1)^{\textup{ord}_p \frac{\#B(K_v)/\phi A(K_v)}{\#A(K_v)[\phi]}}=(-1)^{\textup{ord}_pQ(f)}.\]
By Remark \ref{global_cokernel_remark} and properties of Smith normal form, we have 
\[\textup{ord}_pQ(f)=\textup{ord}_p\det\big(f\mid X_p(A/K)\otimes_{\mathbb{Z}_p}\mathbb{Q}_p\big),\]
giving the result.
\end{proof}

\section{Brauer relations in function fields of curves} \label{sec_6}

 In this section, we take $K$ to be a number field and let $X$ be a curve over $K$.\footnote{Following  \cite[Convention 1.1]{DGKM2024}, we assume that $X$ is smooth and proper, but do not assume that $X$ is connected, nor that its connected components are geometrically connected. See \cite{KM2024} for a discussion of the relevant properties  of such curves and their Jacobians.} Let $G$ be a finite subgroup of $\textup{Aut}_K(X)$, so that $G$ acts on the Jacobian $\textup{Jac}_X$ by $K$-automorphisms. 
We implicitly fix an embedding $\overline{\mathbb{Q}_p}\hookrightarrow \mathbb{C}$ for each prime $p$, via which we will view $\overline{\mathbb{Q}_p}[G]$-representations as $\mathbb{C}[G]$-representations without further comment. We denote by $\left \langle ~,~ \right \rangle$ the standard inner product between (characters of) finite dimensional $\mathbb{C}[G]$-representations.  

\subsection{Recollections from \cite{MR2534092} and \cite{DGKM2024}} \label{recollections}
Let $\mathcal{H}$ be a set of representatives for the subgroups of $G$ up to conjugacy. We call an element
$\Theta=\sum_iH_i-\sum_j H_j'\in \mathbb{Z}[\mathcal{H}]$
a \textit{Brauer relation} if 
we have an isomorphism of $\mathbb{C}[G]$-modules
\[ \bigoplus_{i}\mathbb{C}[G/H_i]\cong  \bigoplus_{j}\mathbb{C}[G/H_j'].\]
For example, when $G=S_3$ is the symmetric group of order $6$, the element $2C_2+C_3-2S_3-\{1\}$ is a Brauer relation, where $C_2$ is a choice of subgroup of order $2$ and $C_3$ has order $3$.  

 In the terminology of \cite[Definition 3.16]{DGKM2024}, we say that a $G$-module homomorphism
\[\Phi: \bigoplus_i \mathbb{Z}[G/H_i]\longrightarrow \bigoplus_{j}\mathbb{Z}[G/H_j']\]
\textit{realises} the Brauer relation $\Theta$ if it is injective with finite cokernel (i.e. becomes an isomorphism after tensoring by $\mathbb{C}$).  Per \cite[Lemma 3.17]{DGKM2024}, for every Brauer relation we can find such a homomorphism. If $M$ is a $\mathbb{Z}[G]$-module, then for all subgroups $H$ of $G$, we have a canonical isomorphism $\textup{Hom}_G(\mathbb{Z}[G/H],M)\cong M^H$. In this way, such a $\Phi$ induces a homomorphism 
\[\Phi^*:\bigoplus_jM^{H_j'}\longrightarrow \bigoplus_iM^{H_i}.\]
Writing $S=\bigsqcup_iG/H_i$ and $S'=\bigsqcup_jG/H_j'$, the associated permutation modules $\mathbb{Z}[S]$ and $\mathbb{Z}[S']$ are self-dual, and $\Phi:\mathbb{Z}[S]\rightarrow \mathbb{Z}[S']$ induces a dual $G$-module homomorphism $\Phi^\vee:\mathbb{Z}[S']\rightarrow \mathbb{Z}[S]$. See \cite[Section 3.4]{DGKM2024} for details.

For a field $\mathcal{K}$ of characteristic $0$,   a self-dual $\mathcal{K}[G]$-representation $V$, and a Brauer relation $\Theta$ for $G$, there is an associated  \textit{regulator constant} 
\[\mathcal{C}_\Theta(V)\in \mathcal{K}^{\times}/\mathcal{K}^{\times 2},\]
first introduced by T. and V. Dokchitser in \cite[Definition 2.13]{MR2534092} (see also \cite[Definition 3.6]{DGKM2024}). We now define the set of representations $\mathcal{T}_{\Theta,p}$ appearing in Theorem \ref{thm:termcompat_into}.

\begin{definition}
Let $p$ be a prime, let $E_p$ denote the maximal unramified extension of $\mathbb{Q}_p$, and denote by $\textup{ord}_p:E_p\twoheadrightarrow \mathbb{Z}$ the normalised valuation.  Noting that $\textup{ord}_p$ induces a map $E_p^{\times}/E_p^{\times 2}~\rightarrow \mathbb{Z}/2\mathbb{Z}$, we denote by $\mathcal{T}_{\Theta,p}$ the set of all self-dual $\mathbb{C}[G]$-representations $\tau$ satisfying
\begin{equation} \label{congruence_btw_reps}
\left \langle \tau,V\right \rangle \equiv \textup{ord}_p~\mathcal{C}_\Theta(V) \pmod 2,\end{equation}
for all self-dual $E_p[G]$-representations $V$. 
\end{definition}

\begin{remark}
The definition of the set $\mathcal{T}_{\Theta,p}$   differs slightly from that of the set $\textbf{T}_{\Theta,p}$ introduced in  \cite{MR2534092}, since we require \eqref{congruence_btw_reps} to hold for all self-dual $E_p$-representations, while \cite[Definition 2.50] {MR2534092} only requires it to hold for self-dual $\mathbb{Q}_p$-representations.  Thus, as defined, $\mathcal{T}_{\Theta,p}$ might be a proper subset of $\textbf{T}_{\Theta,p}$. Nevertheless, the same arguments given in \cite[Section 2]{MR2534092} show that $\mathcal{T}_{\Theta,p}$ is non-empty. In particular,
the expression in \cite[Remark 2.51]{MR2534092} defines an explicit element of $\mathcal{T}_{\Theta,p}$, provided $\mathbb{Q}_p$ is replaced by $E_p$ in the direct sum. Moreover, one readily sees that the sets $\mathcal{T}_{\Theta,p}$ and $\textbf{T}_{\Theta,p}$ agree for the Brauer relations for $C_2\times C_2$ and $D_{2p}$ given in \cite[Example 3.14]{DGKM2024}, and for all Brauer relations in symmetric groups. In particular, they agree for all Brauer relations used in the applications to the `classical' parity conjecture given in \cite[Sections 6-8]{DGKM2024}. The advantage of working with the field $E_p$ in what follows is that all $\overline{\mathbb{Q}_p}$-representations with $\mathbb{Q}_p$-valued character are automatically definable over $E_p$ (as is implied by the triviality of the Brauer group of finite extensions of $E_p$; see \cite[Section 12.2]{serre_scott_2014}). This fact is exploited in the proof of Theorem \ref{thm:termcompat} below.
\end{remark}

Finally, we recall that, for an arbitrary orthogonal (complex) representation $\tau$ of $G$, \cite[Section 2.3]{DGKM2024} defines a certain global root number $w(X^\tau)\in \{\pm 1\}$, which decomposes as a product 
\[w(X^\tau)=\prod_{v\textup{ place of }K}w(X^\tau/K_v)\]
of local root numbers $w(X^\tau/K_v)\in \{\pm 1\}$. 

\subsection{Proof of Theorem \ref{thm:termcompat_into}} With the above background in hand, we can now deduce Theorem \ref{thm:termcompat_into} from Theorem \ref{thm:main_local_formula}. This proves a result that was advertised as Theorem 1.8 in \cite{DGKM2024}.

\begin{theorem}[= Theorem \ref{thm:termcompat_into}] \label{thm:termcompat}
Suppose that $\textup{Jac}_X$ has semistable reduction at all nonarchimedean places of $K$, and suppose that $\Omega^1(\textup{Jac}_X)$ is self-dual as a $G$-representation.
 
Let $p$ be an odd prime and suppose that $\textup{Jac}_X$ has good ordinary reduction at all primes above~$p$. Then, for every Brauer relation $\Theta$ for $G$, and every orthogonal  $\tau_{\Theta,p}\in \mathcal{T}_{\Theta,p}$,  we have 
$$
  w(X^{\tau_{\Theta,p}})=(-1)^{\left\langle \tau_{\Theta,p},\mathcal{X}_p(\textup{Jac}_X)\right\rangle}.
$$
\end{theorem}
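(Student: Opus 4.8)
The strategy is to compare both sides of the desired equality with the corresponding terms attached to the endomorphism $f = \Phi^\vee \Phi$ of $\operatorname{Jac}_X$ that one builds from a realisation $\Phi$ of the Brauer relation $\Theta$, and then to apply Theorem~\ref{thm:main_local_formula} to $f$. More precisely, fix a $G$-module homomorphism $\Phi \colon \mathbb{Z}[S] \to \mathbb{Z}[S']$ realising $\Theta$, where $S = \bigsqcup_i G/H_i$ and $S' = \bigsqcup_j G/H_j'$, and let $\Phi^\vee \colon \mathbb{Z}[S'] \to \mathbb{Z}[S]$ be its dual. Applying $-\otimes_{\mathbb{Z}} \operatorname{Jac}_X$ (via the standard identification $\operatorname{Hom}_G(\mathbb{Z}[G/H], \operatorname{Jac}_X) \cong \operatorname{Jac}_X^H$ and the permutation-module formalism of \cite[Section 3.4]{DGKM2024}) produces a principally polarised abelian variety $\mathcal{J} := \bigoplus_i \operatorname{Jac}_{X/H_i}$ (or rather the abelian variety attached to $S$), together with an isogeny $\phi \colon \mathcal{J}_S \to \mathcal{J}_{S'}$ and dual isogeny $\phi^t$, so that $f = \phi^t\phi$ acts as an endomorphism of $\mathcal{J}_S$. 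Since $\operatorname{Jac}_X$ has semistable reduction everywhere and good ordinary reduction above $p$, so does $\mathcal{J}_S$, so Theorem~\ref{thm:main_local_formula} applies and expresses $\operatorname{ord}_p \det(f \mid \mathcal{X}_p(\mathcal{J}_S))$ as a sum of archimedean and nonarchimedean local terms.

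The next step is to interpret each of the three global/local quantities in the Brauer-relation language. For the Selmer side, $\mathcal{X}_p(\mathcal{J}_S) = \bigoplus_i \mathcal{X}_p(\operatorname{Jac}_X)^{H_i}$ as a module with the residual $G$-action factoring through the permutation structure, and $\operatorname{ord}_p \det(f \mid \mathcal{X}_p(\mathcal{J}_S))$ is, by the defining property of regulator constants, congruent mod $2$ to $\operatorname{ord}_p \mathcal{C}_\Theta(\mathcal{X}_p(\operatorname{Jac}_X))$; since $\tau_{\Theta,p} \in \mathcal{T}_{\Theta,p}$ and $\mathcal{X}_p(\operatorname{Jac}_X)$ is a self-dual $E_p[G]$-representation (self-duality coming from the principal polarisation together with the fact that $p$ is odd, so that the Cassels--Tate pairing gives a perfect alternating — hence, after the standard manipulations, symmetric — pairing on $\mathcal{X}_p$), the congruence \eqref{congruence_btw_reps} gives $\operatorname{ord}_p \mathcal{C}_\Theta(\mathcal{X}_p(\operatorname{Jac}_X)) \equiv \langle \tau_{\Theta,p}, \mathcal{X}_p(\operatorname{Jac}_X) \rangle \pmod 2$. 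Thus the left-hand side of Theorem~\ref{thm:main_local_formula}, applied to this $f$, is $(-1)^{\langle \tau_{\Theta,p}, \mathcal{X}_p(\operatorname{Jac}_X)\rangle}$. For the archimedean terms, $\det(f \mid \Omega^1(\mathcal{J}_S/K_v))$ unwinds, using the hypothesis that $\Omega^1(\operatorname{Jac}_X)$ is self-dual as a $G$-representation, to $\mathcal{C}_\Theta(\Omega^1(\operatorname{Jac}_X/K_v))$ up to squares, which is exactly the arithmetic datum entering the local root number $w(X^{\tau_{\Theta,p}}/K_v)$ at archimedean $v$ as defined in \cite[Section 2.3]{DGKM2024}; for the nonarchimedean terms, $\mathfrak{X}_{\mathcal{J}_S, v}^{\operatorname{Fr}_v}$ unwinds to $\bigoplus_i (\mathfrak{X}_{\operatorname{Jac}_X, v})^{H_i \times \operatorname{Fr}_v}$, and $\operatorname{ord}_p \det(f \mid \mathfrak{X}_{\mathcal{J}_S,v}^{\operatorname{Fr}_v} \otimes \mathbb{Q}_p)$ is congruent mod $2$ to a regulator constant $\operatorname{ord}_p \mathcal{C}_\Theta$ of the (self-dual, by Grothendieck's monodromy pairing) $G \times \operatorname{Fr}_v$-lattice $\mathfrak{X}_{\operatorname{Jac}_X,v}$, which is the quantity that matches the local root number $w(X^{\tau_{\Theta,p}}/K_v)$ at nonarchimedean $v$. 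Assembling these, Theorem~\ref{thm:main_local_formula} becomes precisely $(-1)^{\langle \tau_{\Theta,p}, \mathcal{X}_p(\operatorname{Jac}_X)\rangle} = \prod_v w(X^{\tau_{\Theta,p}}/K_v) = w(X^{\tau_{\Theta,p}})$.

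The main obstacle I anticipate is the bookkeeping in the two "unwinding" steps: carefully checking that, for a $G$-equivariant isogeny of abelian varieties built from $\Phi$, the determinant of $f = \Phi^\vee\Phi$ on $\Omega^1$ (resp.\ on the Frobenius-invariants of the toric character lattice) of the permutation-type object $\mathcal{J}_S$ is related, mod squares and up to terms with even $p$-adic valuation, to the Dokchitser regulator constant $\mathcal{C}_\Theta$ of the corresponding $G$-module. This requires matching the definition of $\mathcal{C}_\Theta(V)$ (a ratio of discriminants of induced pairings on $V^{H_i}$ versus $V^{H_j'}$) with $\det(\phi^t\phi \mid V^G)$-type expressions — essentially the content linking Proposition~\ref{main_size_prop} and the regulator-constant formalism — and then identifying the resulting local constants with the $w(X^\tau/K_v)$ of \cite{DGKM2024}; the identification at nonarchimedean $v$ is exactly what Proposition~\ref{prop:tamagawa_regulator} (advertised in the layout) is there to supply, and the archimedean identification similarly rests on \eqref{degree_as_det} together with the root-number computations cited in Remark~\ref{rem:CFKS_parallel}. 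I would also need to confirm the self-duality of $\mathcal{X}_p(\operatorname{Jac}_X)$ as an $E_p[G]$-representation (not merely over $\mathbb{Q}_p$), which is where the remark about $\overline{\mathbb{Q}_p}$-representations with $\mathbb{Q}_p$-valued character being definable over $E_p$ enters, so that \eqref{congruence_btw_reps} is genuinely applicable.
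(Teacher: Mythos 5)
Your proposal follows essentially the same route as the paper: build the isogeny $f_\Phi$ between the products of quotient Jacobians from a realisation $\Phi$ of $\Theta$ (this is \cite[Theorem 4.3]{DGKM2024}), apply Theorem \ref{thm:main_local_formula} to $f_{\Phi\Phi^\vee}=f_\Phi^t f_\Phi$, identify each determinant with a regulator constant of the corresponding $G$-module ($\mathcal{X}_p(\textup{Jac}_X)$, $\mathfrak{X}_{\textup{Jac}_X,v}^{\textup{Fr}_v}\otimes\mathbb{Q}_p$, $\Omega^1$) via \cite[Corollary 3.21]{DGKM2024}, invoke the defining congruence for $\tau_{\Theta,p}$ (over $E_p$, exactly the subtlety you flag for $\Omega^1$), and convert to local root numbers via \cite[Proposition 2.22]{DGKM2024}. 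The only adjustments are reference-level: self-duality of $\mathcal{X}_p$ as a $G$-representation is quoted from \cite[Theorem 1.2]{KM2024} rather than derived from the Cassels--Tate pairing, and the root-number identifications come from \cite[Proposition 2.22]{DGKM2024} rather than from Proposition \ref{prop:tamagawa_regulator} (which is itself deduced from this argument).
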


\begin{proof}
Write $\Theta=\sum_iH_i-\sum_jH'_j$ and let $\Phi$ be a homomorphism realising $\Theta$. For a subgroup $H$ of $G$, denote by $X/H$ the quotient of $X$ by $H$.   By \cite[Theorem 4.3]{DGKM2024}, associated to the $G$-module homomorphism $\Phi$ we have isogeny 
\[f_\Phi: A:=\prod_j \textup{Jac}_{X/H_j'}\longrightarrow B:=\prod_i \textup{Jac}_{X/H_i}.\]
By the same result, we have $f_{\Phi}^t f_\Phi=f_{\Phi \Phi^\vee}$, where $f_{\Phi \Phi^\vee}$ denotes the self-isogeny of $A$ associated to the $G$-module homomorphism $\Phi \Phi^\vee$.

Since $\textup{Jac}_X$ is semistable at all nonarchimedean places $v$ of $K$, the same is true of its homomorphic image $\textup{Jac}_{X/H}$, for each subgroup $H$ of $G$. In particular, the abelian varieties $A$ and $B$ have everywhere semistable reduction. Similarly, both $A$ and $B$ have good ordinary reduction at all places $v$ dividing $p$.

We now apply Theorem \ref{thm:main_local_formula} to the isogeny $f_\Phi:A\rightarrow B$, yielding  
\begin{eqnarray*}
 (-1)^{\textup{ord}_p\det(f_{\Phi \Phi^\vee}\mid \mathcal{X}_p(A))}=  \prod_{v\mid \infty}(-1)^{ \textup{ord}_p \det(f_{\Phi \Phi^\vee} \mid \Omega^1(A/K_v))} \cdot \prod_{v\nmid \infty} (-1)^{\textup{ord}_p \det(f_{\Phi \Phi^\vee} \mid \mathfrak{X}_{A,v}^{\textup{Fr}_v}
\otimes \mathbb{Q}_p)}.
\end{eqnarray*}
To prove the result, it suffices to show that the right hand side of this formula is equal to the global root number $w(X^\tau)$, and that the left hand side is equal to $(-1)^{\langle \tau_{\Theta,p},\mathcal{X}_p(\textup{Jac}_X)\rangle}$.

For each nonarchimedean place $v$ of $K$, there is an isomorphism 
\begin{equation} \label{invariant_identification_lemma}
\mathfrak{X}_{A,v}^{\textup{Fr}_v}\otimes_{\mathbb{Z}}\mathbb{Q}_p\cong \bigoplus_j \big(\mathfrak{X}_{\textup{Jac}_X,v}^{\textup{Fr}_v}\otimes_{\mathbb{Z}}\mathbb{Q}_p\big)^{H_j'},
\end{equation}
identifying  the endomorphism $f_{\Phi \Phi^\vee}$ of $\mathfrak{X}_A^{\textup{Fr}}\otimes_{\mathbb{Z}}\mathbb{Q}_p$ with the endomorphism $(\Phi \Phi^\vee)^*$ of the right hand side. Indeed, this follows from \cite[Remark 4.28]{KM2024}, applied to the contravariant functor sending a semistable abelian variety $A'/K_v$ to the $\mathbb{Q}_p$-vector space $\mathfrak{X}_{A',v}^{\textup{Fr}_v}\otimes_{\mathbb{Z}}\mathbb{Q}_p$.\footnote{Strictly speaking, \cite[Remark 4.28]{KM2024} and the preceding lemma take as input additive functors defined on the category of all abelian varieties over $F$, while the functor considered here is naturally defined only on semistable abelian varieties. One checks, however, that this subtlety makes no difference to the proof.} 

In light of \eqref{invariant_identification_lemma}, it follows from \cite[Corollary 3.21]{DGKM2024} that, for each nonarchimedean place $v$ of $K$, we have
\[\textup{ord}_p\det(f_{\Phi \Phi^\vee} \mid \mathfrak{X}_{A,v}^{\textup{Fr}_v}
\otimes \mathbb{Q}_p)\equiv \mathcal{C}_\Theta(\mathfrak{X}_{\textup{Jac}_X,v}^{\textup{Fr}_v}\otimes_{\mathbb{Z}}\mathbb{Q}_p)\equiv \big \langle \tau_{\Theta,p}, \mathfrak{X}_{\textup{Jac}_X,v}^{\textup{Fr}_v}\otimes_{\mathbb{Z}}\mathbb{C}\big \rangle  \hspace{-5pt}\pmod 2,\]
the second congruence following from the defining property of $\tau_{\Theta,p}$. Consequently, by \cite[Proposition 2.22]{DGKM2024}, we have 
\[ (-1)^{\textup{ord}_p \det(f_{\Phi\Phi^\vee}\mid  \mathfrak{X}_{A,v}^{\textup{Fr}_v}
\otimes \mathbb{Q}_p)}=w(X^{\tau_{\Theta,p}}/K_v)\]
for all nonarchimedean places $v$ of $K$. 
Arguing similarly, noting that the $\mathbb{Q}_p[G]$-representation $\mathcal{X}_p(A/K)$ is self-dual by \cite[Theorem 1.2]{KM2024}, we see that 
\[\textup{ord}_p\det(f_{\Phi \Phi^\vee}\mid \mathcal{X}_p(A))\equiv \left \langle \tau_{\Theta,p}, \mathcal{X}_p(A)\right \rangle  \hspace{-5pt}\pmod 2.\]
Finally, we claim that, for all archimedean places $v$ of $K$, we have 
\begin{equation} \label{omega_eq}
(-1)^{\textup{ord}_p \det(f_{\Phi \Phi^\vee} \mid \Omega^1(A/K_v))}=(-1)^{\left \langle \tau_{\Theta,p},\Omega^1(A/K_v)\right \rangle }.
\end{equation}
To see this, note that the $\mathbb{C}[G]$-representation $ \Omega^1(A/K_v)$ has rational character. Indeed, having assumed that $ \Omega^1(A/K_v)$ is self-dual,  the representation  $ \Omega^1(A/K_v)^{\oplus 2}$ is rational (see, for example, the second proposition on page 82 of \cite{MR861973}). In particular, $ \Omega^1(A/K_v)$ is realisable over the maximal unramified extension of $\mathbb{Q}_p$. We may now deduce \eqref{omega_eq} by arguing precisely as before. 

To complete the proof, it remains to note that the right hand side of \eqref{omega_eq}  is equal to $w(X^{\tau_{\Theta,p}}/K_v)$, by \cite[Proposition 2.22]{DGKM2024}.
\end{proof}

\begin{remark} \label{pseudo_rem}
In \cite[Definition 4.1]{DGKM2024}, the notion of a \textit{pseudo Brauer relation for }$G$ \textit{and} $X$ is defined,  generalising  the notion of a Brauer relation recalled above. Theorem \ref{thm:termcompat} then continues to hold in this setting, with the corresponding representations $\tau_{\Theta,p}$ defined as in \cite[Section 3.3]{DGKM2024}, but with $E_p$ replacing $\mathbb{Q}_p$ again. The proof is identical, with the decision to restrict to Brauer relations in the above made solely for ease of exposition.
\end{remark}

\subsection{A local version of Theorem \ref{thm:termcompat}}
Now let $F$ be a local field of characteristic $0$, let $X$ be a curve over $F$, and let $G$ be a finite subgroup of $\textup{Aut}_F(X)$.  Assume that $\Omega^1(\textup{Jac}_X)$ is self-dual as a $G$-module, and let $\Theta$ be a Brauer relation for $G$. In \cite[Definition 5.16]{DGKM2024}, a local invariant $\Lambda_{\Theta}(X/F)\in \mathbb{Q}$ is defined. This invariant serves as an `arithmetic local root number' in the sense that, if $X$ is instead defined over a number field $K$, then by \cite[Theorem 5.1]{DGKM2024} we have 
\begin{equation}
(-1)^{\left \langle \tau_{\Theta,p},\mathcal{X}_p(\textup{Jac}_X)\right \rangle}=\prod_{v\textup{ place of }K}(-1)^{\textup{ord}_p\Lambda_{\Theta}(X/K_v)},
\end{equation}
for each prime $p$ and each orthogonal representation $\tau_{\Theta,p}\in \mathcal{T}_{\Theta,p}$.

Using methods from the proof of Theorem \ref{thm:termcompat}, we can relate these local invariants to the relevant local root numbers, assuming that $\textup{Jac}_X$ has sufficiently nice reduction. This proves a result that was advertised as Theorem 8.1(2) in \cite{DGKM2024}.

\begin{proposition} \label{prop:tamagawa_regulator}
Let $X/F$ and $\Theta$ be as above. Let $p$ be an odd prime and let $\tau_{\Theta,p}\in \mathcal{T}_{\Theta,p}$ be orthogonal. Assume that either: 
\begin{itemize}
\item[(i)] $F$ is nonarchimedean with residue field $k$, $\textup{Jac}_X$ has semistable reduction, and either $k$ has characteristic different from $p$ or $[k:\mathbb{F}_p]$ is even, or
\item[(ii)] $F\cong \mathbb{C}$.
\end{itemize}
Then we have
\[w(X^{\tau_{\Theta,p}}/F)=(-1)^{\textup{ord}_p\Lambda_{\Theta}(X/F)}.\]
\end{proposition}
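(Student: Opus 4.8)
The plan is to imitate the proof of Theorem \ref{thm:termcompat}, but working over the fixed local field $F$ rather than summing over all places of a number field. First I would recall, from \cite[Definition 5.16]{DGKM2024}, that $\Lambda_\Theta(X/F)$ is built from the same regulator-constant data that governs Theorem \ref{thm:termcompat}: concretely, for a homomorphism $\Phi$ realising $\Theta$, it should be expressible in terms of $\det\!\big((\Phi\Phi^\vee)^*\mid (\cdots)^{H}\big)$ applied to the relevant local functors attached to $\operatorname{Jac}_X/F$ (the character groups of toric parts in the nonarchimedean case, and $\Omega^1$ in the archimedean case). So the first step is to set $A=\prod_j\operatorname{Jac}_{X/H_j'}$, $B=\prod_i\operatorname{Jac}_{X/H_i}$, and $f_\Phi\colon A\to B$ with $f_\Phi^t f_\Phi=f_{\Phi\Phi^\vee}$, exactly as in the proof of Theorem \ref{thm:termcompat}, noting that $A,B$ inherit semistable (resp.\ good ordinary, when relevant) reduction from $\operatorname{Jac}_X$.

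Next I would apply Theorem \ref{main_thm_local} (the purely local formula) to the isogeny $f_\Phi\colon A\to B$ over $F$. This gives
\[
(-1)^{\operatorname{ord}_p\frac{\#B(F)/f_\Phi A(F)}{\#A(F)[f_\Phi]}}\cdot\big(-1,\chi_{A[f_\Phi],p}\big)_F
=\begin{cases}(-1)^{\operatorname{ord}_p\det(f_{\Phi\Phi^\vee}\mid\Omega^1(A/F))}&F\cong\mathbb{C},\\[4pt](-1)^{\operatorname{ord}_p\det(f_{\Phi\Phi^\vee}\mid\mathfrak{X}_A^{\operatorname{Fr}}\otimes\mathbb{Q}_p)}&F\text{ nonarchimedean}.\end{cases}
\]
In case (ii), $F\cong\mathbb{C}$, the factor $(-1,\chi)_F$ is trivial and the right side is $\det(f_{\Phi\Phi^\vee}\mid\Omega^1(A/F))=\pm\deg(f_\Phi)$; using that $\Omega^1(A/F)$ has rational character (as in the proof of Theorem \ref{thm:termcompat}, via self-duality and the second proposition on page 82 of \cite{MR861973}) one rewrites the exponent as $\langle\tau_{\Theta,p},\Omega^1(A/F)\rangle$, which by \cite[Proposition 2.22]{DGKM2024} equals $w(X^{\tau_{\Theta,p}}/F)$; and one identifies $\det(f_{\Phi\Phi^\vee}\mid\Omega^1(A/F))$ with the defining expression for $\Lambda_\Theta(X/F)$ at a complex place. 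In case (i), one uses the isomorphism \eqref{invariant_identification_lemma} to identify $\mathfrak{X}_A^{\operatorname{Fr}}\otimes\mathbb{Q}_p$ with $\bigoplus_j(\mathfrak{X}_{\operatorname{Jac}_X}^{\operatorname{Fr}}\otimes\mathbb{Q}_p)^{H_j'}$ compatibly with $f_{\Phi\Phi^\vee}$ and $(\Phi\Phi^\vee)^*$, then invokes \cite[Corollary 3.21]{DGKM2024} to rewrite $\operatorname{ord}_p\det$ as $\operatorname{ord}_p\mathcal C_\Theta(\mathfrak{X}_{\operatorname{Jac}_X}^{\operatorname{Fr}}\otimes\mathbb{Q}_p)$, which by the defining property of $\tau_{\Theta,p}$ is $\langle\tau_{\Theta,p},\mathfrak{X}_{\operatorname{Jac}_X}^{\operatorname{Fr}}\otimes\mathbb{C}\rangle$, and this equals $w(X^{\tau_{\Theta,p}}/F)$ by \cite[Proposition 2.22]{DGKM2024}.

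The remaining point is to check that the left-hand side of the displayed formula equals $(-1)^{\operatorname{ord}_p\Lambda_\Theta(X/F)}$, and here the hypotheses in (i) are exactly what is needed. By Lemma \ref{chi_unram_trivial_lemma} (applied to each factor of $A$), when $k$ has characteristic different from $p$ the term $\big(-1,\chi_{A[f_\Phi],p}\big)_F$ is trivial; when $[k:\mathbb{F}_p]$ is even, I would instead run the analysis of Proposition \ref{main_eq_res_prop}: the formal-group contribution has $p$-adic valuation $v_F(\det D(\widehat{f_\Phi}))\cdot[k:\mathbb{F}_p]$ and $\big(-1,\chi_{\ker\widehat{f_\Phi}}\big)_F=(-1)^{n[F:\mathbb{Q}_p]}$, both of which have even $p$-adic parity once $[k:\mathbb{F}_p]$ is even (since $[F:\mathbb{Q}_p]=e(F/\mathbb{Q}_p)[k:\mathbb{F}_p]$), so the residue-characteristic-$p$ corrections drop out of the parity statement. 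What is then left on the left-hand side, after also clearing the factor $|\det D(\widehat{f_\Phi})|_F$ via Lemma \ref{size_of_coker_tamagawa} and Proposition \ref{Tam_comp_prop}, is a ratio of Tamagawa numbers and component-group invariants whose $p$-adic parity is precisely $\operatorname{ord}_p\Lambda_\Theta(X/F)$ by the definition in \cite[Definition 5.16]{DGKM2024} (together with \cite[Corollary 3.21]{DGKM2024} applied to the component-group functor). I expect the main obstacle to be bookkeeping: matching the \emph{exact} normalisation of $\Lambda_\Theta(X/F)$ in \cite{DGKM2024} — which packages Tamagawa numbers, component groups, and (at infinite places) periods — against the combination of quantities produced by Theorem \ref{main_thm_local} and Lemma \ref{size_of_coker_tamagawa}, and verifying that all discrepancies are squares or have even $p$-adic valuation. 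The even-residue-degree hypothesis is doing real work and cannot be removed by these methods, mirroring the role of $[F:\mathbb{Q}_p]$ in Proposition \ref{main_eq_res_prop}.
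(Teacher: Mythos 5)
Your overall skeleton is the paper's: build $f_\Phi\colon A=\prod_j\textup{Jac}_{X/H_j'}\to B=\prod_i\textup{Jac}_{X/H_i}$ from a homomorphism realising $\Theta$, identify $w(X^{\tau_{\Theta,p}}/F)$ with the parity of $\textup{ord}_p\det(f_{\Phi\Phi^\vee}\mid\Omega^1(A/F))$ resp. $\textup{ord}_p\det(f_{\Phi\Phi^\vee}\mid\mathfrak{X}_A^{\textup{Fr}}\otimes\mathbb{Q}_p)$ exactly as in the proof of Theorem \ref{thm:termcompat}, and then match that determinant against $\Lambda_\Theta(X/F)$. Your case (ii) and the residue-characteristic-$\neq p$ part of case (i) are fine and agree with the paper, which pins the final identification down by citing \cite[Theorem 5.17(3),(4)]{DGKM2024} rather than unwinding \cite[Definition 5.16]{DGKM2024} by hand.

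The genuine gap is in the subcase of (i) where $k$ has characteristic $p$ and $[k:\mathbb{F}_p]$ is even. There the proposition assumes only that $\textup{Jac}_X$ (hence $A$ and $B$) is semistable; good ordinary reduction at $p$ is \emph{not} a hypothesis. Your route invokes Theorem \ref{main_thm_local} over a field of residue characteristic $p$ and then ``runs the analysis of Proposition \ref{main_eq_res_prop}'' to get $\big(-1,\chi_{\ker\widehat{f_\Phi}}\big)_F=(-1)^{n[F:\mathbb{Q}_p]}$; but both of these are proved only for good ordinary reduction — the essential input is the isomorphism $\mathcal{F}_A\cong\widehat{\mathbb{G}}_m^g$ over the completion of the maximal unramified extension, which is unavailable for, say, multiplicative or good supersingular reduction — so neither result applies under the stated hypotheses, and your argument as written only proves the proposition with an extra good-ordinarity assumption. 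The repair, which is what the paper does, is to bypass the character term entirely: Corollary \ref{cor:uniform_residue_char} needs only semistability and no restriction on the residue characteristic, and gives $(-1)^{\textup{ord}_p\frac{\#B(F)/f_\Phi A(F)}{\#A(F)[f_\Phi]}}=(-1)^{\textup{ord}_p\det(f_{\Phi\Phi^\vee}\mid\mathfrak{X}_A^{\textup{Fr}})}\cdot(-1)^{\textup{ord}_p|\det D(\widehat{f_\Phi})|_F}$, where the last exponent is $0$ if $\textup{char}\,k\neq p$ and is a multiple of $[k:\mathbb{F}_p]$, hence even, otherwise; so no evaluation of $\chi_{A[f_\Phi],p}$ (and no ordinarity) is needed, and the comparison with $\Lambda_\Theta(X/F)$ is then exactly \cite[Theorem 5.17(3)]{DGKM2024}. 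With that substitution your proof goes through.
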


\begin{proof}
Let $\Phi$ be a homomorphism realising $\Theta$, and let 
\[f_\Phi: A=\prod_j \textup{Jac}_{X/H_j'}\longrightarrow B=\prod_i \textup{Jac}_{X/H_i}\]
be the corresponding isogeny afforded by \cite[Theorem 4.3]{DGKM2024}.

Suppose that $F\cong \mathbb{C}$.   Arguing as in the proof of Theorem \ref{thm:termcompat}, we have
\[w(X^{\tau_{\Theta,p}}/F)=(-1)^{\textup{ord}_p \det(f_{\Phi}^t f_{\Phi} \mid \Omega^1(A/F))}.\]
By \eqref{degree_as_det}, the right hand side of the above formula is equal to $(-1)^{\textup{ord}_p\textup{deg}(f_\Phi)}$, which is equal to $(-1)^{\textup{ord}_p\Lambda_{\Theta}(X/F)}$ by \cite[Theorem 5.17(4)]{DGKM2024}. 

Now suppose that $F$ is nonarchimedean. This time, arguing as in the proof of Theorem \ref{thm:termcompat} we see that
\[w(X^{\tau_{\Theta,p}}/F)=(-1)^{\textup{ord}_p \det(f_{\Phi}^t f_{\Phi}\mid \mathfrak{X}_{A}^{\textup{Fr}}
\otimes \mathbb{Q}_p)}.\]
Under the assumptions of the statement, the right hand side is then equal to $(-1)^{\textup{ord}_p\Lambda_{\Theta}(X/F)}$,
 by \cite[Theorem 5.17(3)]{DGKM2024} and Corollary \ref{cor:uniform_residue_char}.
\end{proof}

\begin{remark}
In spite of Proposition \ref{prop:tamagawa_regulator}, we expect (e.g. by looking at the case $F=\mathbb{R}$ in Theorem \ref{main_thm_local}) that $w(X^{\tau_{\Theta,p}}/F)$ and $(-1)^{\textup{ord}_p\Lambda_{\Theta}(X/F)}$ need not always be equal. 
\end{remark}

\begin{remark}
As with Theorem \ref{thm:termcompat},  Proposition \ref{prop:tamagawa_regulator} continues to hold for arbitrary pseudo Brauer relations $\Theta$ for $G$ and $X$, provided $\tau_{\Theta,p}$ is defined as in Remark \ref{pseudo_rem}. The proof is identical. 
\end{remark}

\end{document}